\def\Bbb{\mathbb}
\def\Cal{\mathcal}
\newcommand{\stack}[2]{{\stackrel{\scriptscriptstyle{#1}}{#2}}\phantom{}}
\def\bbR{{\mathbb{R}}}
\newcommand{\el}{B}
\newcommand{\BO}{\stack{o}{B}}
\newcommand{\EO}{\stack{o}{E}}
\newcommand{\GO}{\stack{o}{G}}
\let\i=\iota
\let\w=\omega
\renewcommand{\div}{\operatorname{div}}
\newcommand{\tr}{\operatorname{tr}}
\newcommand{\newc}{\newcommand}
\let\ccdot\cdot
\def\cdot{\hbox to 2.5pt{\hss$\ccdot$\hss}}
\newcommand{\om}{\omega}
\renewcommand{\phi}{\varphi}
\newc{\aI}{\mbox{\boldmath{$ I$}}}
\newc{\aR}{\mbox{\boldmath{$ R$}}}
\newc{\aDeR}{\mbox{\boldmath{$ U$}}_B{}^P{}_C{}^Q}
\newc{\al}{\mbox{\boldmath$ \Delta$}}             
\newc{\nda}{\mbox{\boldmath$ \nabla$}}
\newc{\ad}{\mbox{\boldmath$ d$}}
\newc{\da}{\mbox{\boldmath$ \delta$}}
\newc{\aK}{\mbox{\boldmath{$ K$}}}
\newc{\aL}{\mbox{\boldmath{$ L$}}}
\newtheorem{theorem}{Theorem}[section]
\newtheorem{lemma}[theorem]{Lemma}
\newtheorem{proposition}[theorem]{Proposition}
\newtheorem{corollary}[theorem]{Corollary}
\newcommand{\cC}{{\Cal C}}
\newcommand{\cM}{{\Cal M}}
\newcommand{\cL}{{\Cal L}}
\newcommand{\cS}{{\Cal S}}
\newcommand{\Ric}{\operatorname{Ric}}
\newcommand{\Sc}{\operatorname{Sc}}
\newcommand{\wh}{\widehat}
\let\i=\iota
\newcommand{\nn}[1]{(\ref{#1})}
\newc{\strutdd}{\rule{0mm}{5mm}}
\newc{\tensor}[1]{#1}
\newc{\Mvariable}[1]{\mbox{#1}}
\newc{\down}[1]{{}_{
\ifthenelse{\equal{#1}{;}}{|}{#1}}}
\newc{\up}[1]{{}^{#1}}
\newc{\C}{C}
\newc{\JulyStrut}{\rule{0mm}{6mm}}
\newc{\midtenPan}{\mbox{\sf S}}
\newc{\midten}{\mbox{\sf T}}
\newc{\midtenEi}{\mbox{\sf U}}
\newc{\ATen}{\mbox{\sf E}}
\newc{\BTen}{\mbox{\sf F}}
\newc{\CTen}{\mbox{\sf G}}
\def\sideremark#1{\ifvmode\leavevmode\fi\vadjust{\vbox to0pt{\vss
 \hbox to 0pt{\hskip\hsize\hskip1em
 \vbox{\hsize3cm\tiny\raggedright\pretolerance10000
 \noindent #1\hfill}\hss}\vbox to8pt{\vfil}\vss}}}%
\newtheorem{definition}[theorem]{Definition}
\newtheorem{example}[theorem]{Example}
\newtheorem{remark}[theorem]{Remark}
\newcommand{\bd}{\begin{definition}}
\newcommand{\ed}{\end{definition}}
\newcommand{\br}{\begin{remark}}
\newcommand{\er}{\end{remark}}
\newcommand{\bt}{\begin{tabular}}
\newcommand{\et}{\end{tabular}}
\def\cdot{\hbox to 2.5pt{\hss$\ccdot$\hss}}
\begin{document}
\renewcommand{\today}{} \title{Universal principles for Kazdan-Warner
  and Pohozaev-Schoen type identities} \author{A.\ Rod Gover and Bent \O{}rsted}

\address{ARG: (1) Department of Mathematics\\
  The University of Auckland\\
  Private Bag 92019\\
  Auckland 1\\
  New Zealand. \newline
(2) Mathematical Sciences Institute, Australian National University,
\newline\indent ACT 0200, Australia} \email{gover@math.auckland.ac.nz}
\address{
B\O{}: Department of Mathematical Sciences\\
Aarhus University\\
Ny Munkegade\\
DK-8000 Aarhus C\\
Denmark} \email{orsted@imf.au.dk}  

\vspace{10pt}

\keywords{curvature prescription, Nirenberg problem, Kazdan-Warner
  identity, Pohozaev identity, conformal geometry, conservation laws} 

\renewcommand{\arraystretch}{1}
\maketitle
\renewcommand{\arraystretch}{1.5}

\pagestyle{myheadings}
\markboth{Gover and \O{}rsted}{Kazdan-Warner
  and Pohozaev-Schoen type identities}

\begin{abstract}
  The classical Pohozaev identity constrains potential solutions of
  certain semilinear PDE boundary value problems.  The Kazdan-Warner
  identity is a similar necessary condition important for the
  Nirenberg problem of conformally prescribing scalar curvature on the
  sphere. For dimensions $n\geq 3$ both identities are captured and
  extended by a single identity, due to Schoen in 1988. In each of the
  three cases the identity requires and involves an infinitesimal
  conformal symmetry. For structures with such a conformal vector
  field, we develop a very wide, and essentially complete, extension
  of this picture.  Any conformally variational natural scalar
  invariant is shown to satisfy a Kazdan-Warner type identity, and a
  similar result holds for scalars that are the trace of a locally
  conserved 2-tensor.  Scalars of the latter type are also seen to
  satisfy a Pohozaev-Schoen type identity on manifolds with boundary,
  and there are further extensions. These phenomena are explained and
  unified through the study of total and conformal variational theory,
  and in particular the gauge invariances of the functionals
  concerned.  Our generalisation of the Pohozaev-Schoen identity is
  shown to be a complement to a standard conservation law from physics
  and general relativity.
\end{abstract}

\section{Introduction}
\def\Rm{{\sf R}}
\def\ci{C^\infty}

The current work is concerned with developing an effective and
universal approach to treating and extending three identities,
 each of which plays a central role in the constraint of classes of non-linear
geometric PDE problems.

Of these, the most easily stated arises in the problem of conformally
prescribing scalar curvature; that is of determining, on a fixed
conformal structure $(M^n,c)$, which functions may be the scalar
curvature $\Sc^g$ for some $g\in c$. This problem is especially interesting on
the sphere, where it is known as the Nirenberg problem. While
there are obvious constraints arising from the Gauss-Bonnet theorem,
from the seminal work \cite{KW74} of Kazdan-Warner it follows that there are
positive functions on the 2-sphere $S^2$ that are not the curvatures
of metrics that are pointwise conformal to the standard metric. A
similar result was found in higher dimensions \cite{KW75}, and in
all cases the results are a consequence of an identity satisfied by the first
spherical harmonics. A well-known formulation and extension of these
results is due to Bourguignon and Ezin \cite{BoE}, and is based around
their identity: For any conformal (Killing) vector field $X$, of a
closed Riemannian $n$-manifold $(M,g)$, the scalar curvature satisfies
\begin{equation}\label{BoEr}
\int_M \cL_X \Sc^g~dv_g=0,
\end{equation}
where $\cL_X$ denotes the Lie derivative.

Earlier Pohozaev described an identity which applies to, for example,
star shaped manifolds $M$ with smooth boundary $\partial M$ in
Euclidean space \cite{poh65}.  It was used, for example, to establish
non-existence results for a class of semi-linear variants of
eigenvalue boundary problems. These take the form $\Delta u+\lambda
f(u)=0$ with $u|_{\partial M}=0$. Here $f$ is a non-linear function
that satisfies $f(0)=0$. The Pohozaev identity states
\begin{equation}\label{poh}
\lambda n \int_M F(u) 
+ \frac{2-n}{2} \lambda\int_M f(u)u
=\frac{1}{2}\int_{\partial M} (x\cdot \nu)(\nabla_\nu u)^2;
\end{equation}
$x$ is the Euler vector field, $\nu$ is the outward unit normal,
$\nabla_\nu$ the directional derivative along $\nu$, and $F(u)=\int_0^u f(t)dt$.

Remarkably the identities \nn{BoEr} and \nn{poh} are related. More
precisely there is an identity due to Schoen \cite[Proposition
1.4]{schoen} which, at least for $n\geq 3$, includes both as special
cases: For any conformal vector field $X$ on a Riemannian $n$-manifold
$(M,g)$ with smooth boundary $\partial M$, the following identity
holds
\begin{equation}\label{sch}
  \int_M \cL_X \Sc ~dv_g
  = \frac{2n}{n-2}\int_{\partial M}\left(\Ric-\frac{1}{n}\Sc\cdot g \right)(X,\nu)d\sigma_g ;
\end{equation}
here $\nu$ is the outward normal, and $\Ric$ denotes the Ricci
curvature. This was proved using the Bianchi identities, and 
used as a balancing condition for approximate
solutions to a PDE problem linked to the Yamabe equation.  Since
$\partial M$ may be empty it is clear that \nn{sch} extends
\nn{BoEr} for the cases $n\geq 3$. In Section \ref{pohS} we describe,
for the reader's convenience, how to recover \nn{poh} from
\nn{sch}.

The three identities have had a major impact in non-linear and
geometric analysis, and are still used extensively in the current
literature. This has motivated the development of analogous and
related identities: For Kazdan-Warner type identities recovering or
generalising \nn{BoEr} see for example \cite{AHA}, \cite{Bo86},
\cite{BrO91}, \cite{CY}, \cite{delRob}, \cite{GuoHL}, \cite{V2}; for
the Pohozaev identity \nn{poh} see \cite{PRS}, \cite{PS}, \cite{Wag};
and for Schoen's identity (\ref{sch}) (which is sometimes also
referred to as a ``Pohozaev identity'') \cite{ezin},
\cite{Gursky}. This is by no means a complete list. Many of the works
in the area treat specific curvature quantities, and are motivated by
particular geometric problems. Exceptions include \cite{BrO91} which
gives an analogue of \nn{BoEr} for all heat invariants corresponding
to a conformally covariant operator. Most notably, by an an elegant
and powerful argument, Bourguignon describes in \cite{Bo86} a very
general framework for extending the ``Kazdan-Warner identity''
\nn{BoEr}; this is further developed and applied by Delanoe and Robert
in \cite{delRob}.

The identities and works mentioned suggest the following problems: For
what scalar invariants $V=V(g)$ (replacing/generalising $\Sc^g$) do we
expect an analogue of the classical Kazdan-Warner identity \nn{BoEr}?
Any such identity gives an immediate constraint for conformal
curvature prescription on the sphere. Similarly, for what scalar
invariants $V=V(g)$ do we expect an analogue of \nn{sch}? Note that
this identity gives a non-trivial constraint in a vastly wider range
of geometric structures, so any extension has great potential for
application.  The third main problem is to precisely relate the two
types of identity.  For example if, in some general situation of
closed manifolds, $V$ satisfies an analogue of the Kazdan-Warner
identity then do we expect it to also satisfy the Schoen identity
\nn{sch} on manifolds with boundary? That there should be some
subtlety here is clear from the factor of $\frac{1}{n-2}$ in \nn{sch};
Schoen's construction apparently does not recover \nn{BoEr} in
dimension 2.

In the current work we obtain essentially complete answers to the
questions posed by showing that a closely related set of general
principles underlie the Kazdan-Warner and Pohozaev-Schoen type
identities. (Here we restrict to the case where $X$ is a conformal
vector field. There are clearly extensions to related settings, but
this will be taken up elsewhere.)  The principles involved are
strongly related to the notion of symmetry and conservation that dates
back to the work of D.\ Hilbert and E.\ Noether, and indeed this is
our starting point in Section \ref{HN}.  Overall we obtain very
general extensions of the Kazdan-Warner and Pohozaev-Schoen
identities. Concerning the former, the main results are Theorems
\ref{ukw}, Corollary \ref{aa}, Theorem \ref{nocvt}, and Theorem
\ref{bway}. The first three of these show that an identity of the type
\nn{BoEr} is available for any natural scalar invariant which is
conformally variational (as defined in Section \ref{cvar}) for
suitable functionals of increasing generality; in each case the result
is a direct consequence of symmetry invariance, or in other words of a
gauge invariance, in the action functional concerned. The last Theorem
\ref{bway} extends these results to show that in fact {\em any}
conformally variational natural scalar satisfies such an identity.  In
this case the argument (cf.\ \cite{Bo86,delRob}) is less direct and uses now
the invariance of a 1-form on the space of metrics (in a conformal
class), combined with the Lelong-Ferrand-Obata theory \cite{L-F,O}.

The last mentioned approach appears to be necessary for a class of
critical cases, but it misses the connection with the Schoen type
identities \nn{sch}. On the other hand the very simple argument behind
Theorem \ref{ukw} involves specialising total metric variations, and
so is linked to locally conserved 2-tensors (as explained in Section
\ref{cvar}). Through this its proof is intimately connected to Theorem
\ref{maini} which extends \nn{sch} to an identity that holds for the
trace and trace-free parts of any locally conserved 2-tensor. This is
a very large class of invariants that need not be natural (see e.g.\
Section \ref{se}).  It is precisely the difference between Theorem
\ref{ukw} (or Theorem \ref{maini}) and Theorem \ref{bway} that is
behind the $\frac{1}{n-2}$ factor mentioned earlier, and the
generalisation of this phenomenon.  See also Corollary \ref{ci} and
the discussion below.

In Section \ref{consS} we show that the
generalised Schoen identity of Theorem \ref{maini} is a precise
complement to the usual conservation theory extant in the Physics
literature. 

The main results mentioned, and their proofs, appear to unify,
simplify, and considerably extend most of the existing related results
in the literature; see Section \ref{exs} where we show a number of new
results, as well the simplification and unification of a number of
recent particular results in the literature. Specific examples treated
include Gauss-Bonnet curvatures, Q-curvatures, renormalised volume
coefficients, and the mean curvature of a conformal immersion.
Although we do not directly discuss extensions of the Pohozaev
identity \ref{poh} it is clear that such can be obtained from Theorem
\ref{maini} by, for example, an analogue of treatment in Section
\ref{pohS}.

ARG would like to thank Alice Chang, Paul Yang, Matt Gursky and
Fr\'{e}d\'{e}ric Rochon for useful discussions at the meeting
``Geometric and Nonlinear Partial Differential Equations'' held at
Mission Beach Resort, Queensland, 2010. A draft of this work was
presented there. Discussions with Robin Graham and Andreas Juhl at the
Tambara Institute of Mathematics worskop `` Parabolic Geometries and
Related Topics I'', November 2010, are also much appreciated. In particular
Graham provided an answer to a question posed in the first draft, see
Theorem \ref{HET}.

\section{The Hilbert-Noether identities for gradients} \label{HN}

Until further notice we shall suppose we work on a closed (compact
without boundary) oriented connected manifold $M$, of dimension $n\geq
2$ and usually equipped with a Riemannian metric $g$. However we also
consider the space $\cM$ of such metrics on $M$, that space equipped
with the compact open $C^\infty$ topology. For simplicity all structures and 
sections throughout shall be considered smooth ($C^\infty$).

A real valued functional $\cS$ on $\cM$ is called a {\em Riemannian
  functional} if it is diffeomorphism invariant in the sense that it
satisfies
\begin{equation}\label{Rinv}
\cS(\phi^* g)= \cS(g),
\end{equation}
for all $g\in \cM$, and for all diffeomorphisms $\phi:M\to M$.

A {\em natural scalar (Riemannian) invariant} (see e.g.\
\cite{Stredder}) is a scalar valued function which is given by a
universal expression, which is polynomial in the finite jets of the
metric and its inverse, and which has the property that for any
diffeomorphism $\phi:M\to M$ we have
\begin{equation}\label{natural}
\phi^* L(g)= L(\phi^* g).
\end{equation}
An important class of Riemannian functionals, and our main (though
certainly not exclusive) focus here, arise from the integral of such
Lagrangians: that is $g\mapsto \cS (g)$ where
\begin{equation}\label{action}
\cS(g)= \int_M L(g) dv_g
\end{equation}
where $dv_g$ is the metric measure.

\begin{remark}\label{natc} One may construct natural invariants in an 
  obvious way by complete contractions, using the metric, its inverse,
  and the volume form, of expressions polynomial in the Riemann
  curvature, and its Levi-Civita covariant derivatives. In fact all
  natural invariants arise this way as follows by a well known
  argument using Weyl's classical invariant theory and Riemann normal
  coordinates, see e.g.\ \cite{ABP}.
\end{remark}

\subsection{Total metric variations}\label{total} 
The tangent space to $\cM$ is naturally identified with the (smooth)
section space of $S^2M$. A differentiable Riemannian functional is
said to have a {\em gradient} $B(g)$ at $g$, if $B(g)$ is a smooth
section of $S^2M$ and, for all $h\in \Gamma (S^2M)$,
\begin{equation}\label{core}
\cS'(g)(h)=\int_M (h,B(g)) dv_g
\end{equation}
where $(\cdot,\cdot)$ denotes the local pairing of tensors given by
metric contraction. In an abstract index notation we shall write
$B_{ab}$ for $B(g)$.

If we specialise now to $h$ arising from the
pullback along a diffeomorphism generated by a
vector field $X$, then $h=\cL_X g$ and we have
\begin{equation}\label{main}
0=\int_M (\cL_X g ,\el (g))  dv_g,
\end{equation}
from the diffeomorphism invariance of the Riemannian functional
$\cS$.
In terms of the Levi-Civita connection $\nabla$ (for $g$), we have $(\cL_X
g)_{ab}= 2\nabla_{(a}X_{b)}$. 
Thus integrating by parts in \nn{main}
we see that
$$
0=\int_M X^b\nabla^a\el_{ab}  dv_g.
$$
Since the $X^b$ is arbitrary we conclude 
\begin{equation}\label{div}
\nabla^a\el_{ab} =0,
\end{equation}
and we shall say that $B$ is {\em locally conserved}.
This is a standard identity for the gradient of a Riemannian
functional, and is attributed to Hilbert \cite{Besse,Hil}.
Identities derived from symmetries or ``gauge invariance'', such as
this, are often called Noether identities in the literature. 

Now we consider the case where $\cS(g)$ is given by a natural
Lagrangian, as in \nn{action}.  It follows from the result mentioned
in Remark \ref{natc}, and integration by parts, that each directional
derivative of $\cS(g)$ is of the form \nn{core} where $B(g)$ is a
natural (tensor-valued) invariant. From this in turn we conclude that
$\cS(g)$ is differentiable and so the above discussion applies
immediately; in particular the natural tensor $B=B(g)$ satisfies
\nn{div}. 

\begin{remark} \label{others} Although the argument above has used a
  compact Riemannian setting, as an aside here we note the following:
  since $B$ is given by a universal expression in terms of the
  Riemannian curvature and its covariant derivatives, it follows that
  the local result \nn{div} holds on any manifold and in any
  signature.
\end{remark}

To see how non-trivial results may arise from the diffeomorphism
invariance of an action it is useful to understand, via an
infinitesimal argument, how the gradient is generated in the case of a
natural Lagrangian function $L=L(g)$.  Consider a curve of metrics
$g^t$ through $g=g^0$.  Calculating the derivative of \nn{action} at
$t=0$ involves computing the linearisation of $L(g)$ (at $g$),
$$
L'(h):=\dfrac{d}{dt}\Big|_{t=0}L (g^t),
$$
and also the contribution from the measure:
$$
\dfrac{d}{dt}\Big|_{t=0} dv_{g^t}= \frac{1}{2}g^{ab}h_{ab} dv_g .
$$
Putting these together we have 
\begin{equation}\label{precore}
  \dfrac{d}{dt}\Big|_{t=0}\cS(g^t)= 
\int_M \big(L'(h)+ \frac{1}{2} L(g) g^{ab}h_{ab} \big)dv_g .
\end{equation}
However for $h$ arising from an infinitesimal diffeomorphism
we have, as mentioned, $h=\cL_X g$. Thus
$\frac{1}{2}g^{ab}h_{ab}=\nabla_a X^a =\div X$.  On the other hand the
infinitesimal version of the naturality condition \nn{natural} is
\begin{equation}\label{inf}
L'( \cL_X g)= \cL_X L (g)
\end{equation}
and so for $h=\cL_X g$ we have $(L'(h)+ \frac{1}{2} L(g)
g^{ab}h_{ab})= \div (L(g) X)$  whence the right hand side of
\nn{precore} is zero.  The non-trivial identity \nn{core} arises by
calculating in another order. We first integrate \nn{precore} by parts
to yield \nn{core}, and then proceed as argued earlier. So the
information contained in the difference between the two ways of
calculating arises entirely from \nn{inf}.

\subsection{Generalised energy-momentum tensors}\label{se}

The local conservation of natural gradients is a unifying feature
in the discussion which follows.  In fact, as
we shall see, a broader class of gradients satisfy \nn{div}.
 Suppose that rather than restrict to $L$ being a natural
scalar invariant of $(M,g)$, we allow $L$ as follows.  We assume $L$
is a scalar valued function which is given by a universal expression,
which is polynomial in the finite jets of the metric and its inverse,
and also in the finite jets of a collection of other fields that we
shall collectively denote $\Psi$ (and regard as a single field). So we
may write $L=L(g,\Psi)$.  The fields that make up $\Psi$ may be tensor
fields, but also could include for example connections. We shall
not be concerned with the details; it is rather naturality in this
context that is important.  We shall insist that $L$ satisfies
\begin{equation}\label{natural2}
\phi^* L(g, \Psi)= L(\phi^* g, \phi^*\Psi)
\end{equation}
for any diffeomorphism $\phi:M\to M$. So certainly we require that the
nature of the fields $\Psi $ is such that their pullback under
diffeomorphism makes sense, but this is a very weak
restriction. 
We shall call such $L(g,\Psi)$ {\em coupled scalar invariants}.

Now we assume that 
$$
\cS(g,\Psi):=\int_M L (g,\Psi) dv_g
$$
is separately Frechet differentiable with respect to $g$ and $\Psi$,
and that there are respective partial gradients $B(g,\Psi)$, $E(g,\Psi)$,
satisfying
$$
(D_1\cS(g,\Psi))(h)=\int_M (h,B(g,\Psi)) dv_g
$$
and 
$$
(D_2\cS(g,\Psi))(h)=\int_M \langle \psi,E(g,\Psi) \rangle dv_g
$$
where $\psi$ is in the formal tangent space at $\Psi$ to the field
(system) $\Psi$ and $\langle \cdot , \cdot \rangle$ is the pointwise
dual pairing that arises naturally in the problem. (In the other display the notation is as in \nn{core}.)
We shall refer to
$B(g,\Psi)$ as the metric gradient.

The equation $E(g,\Psi)=0$ is a generalised Euler-Lagrange system.  We
have the following result.
\begin{theorem}\label{gg} On $(M,g)$,  let $\Psi_0$ be a solution of the
  generalised Euler-Lagrange system
$$
E(g,\Psi)=0.
$$
Then the metric gradient $B(g,\Psi_0)$ is locally conserved, that is 
\begin{equation}\label{div2}
\nabla^a B_{ab}(g,\Psi_0)=0.
\end{equation}
\end{theorem}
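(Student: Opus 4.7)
The plan is to mimic, in the coupled setting, the derivation of \nn{div} from Section \ref{total}, and then exploit the Euler-Lagrange condition $E(g,\Psi_0)=0$ to kill the $\Psi$-variation.

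First I would observe that the naturality condition \nn{natural2}, together with the fact that the measure is natural in $g$, implies the diffeomorphism invariance of the coupled action:
$$
\cS(\phi^* g,\phi^*\Psi)=\cS(g,\Psi)
$$
for every diffeomorphism $\phi\colon M\to M$. Differentiating this identity along a one-parameter family $\phi_t=\exp(tX)$ of diffeomorphisms generated by a vector field $X$, and using the chain rule with the two partial gradients, gives
\begin{equation*}
0=(D_1\cS(g,\Psi))(\cL_X g)+(D_2\cS(g,\Psi))(\cL_X\Psi)
=\int_M (\cL_X g,B(g,\Psi))\,dv_g+\int_M\langle \cL_X\Psi,E(g,\Psi)\rangle\,dv_g.
\end{equation*}
(Implicit here is that the pullback $\phi_t^*\Psi$ of the field system $\Psi$ is defined and smooth in $t$, so that its $t$-derivative at $0$ really is $\cL_X\Psi$ and lies in the formal tangent space at $\Psi$; this is guaranteed by the standing requirement, made just before \nn{natural2}, that pullback of $\Psi$ makes sense.)

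Next I specialise to the solution $\Psi_0$ of the generalised Euler-Lagrange system. Since $E(g,\Psi_0)=0$, the second term above vanishes identically for every vector field $X$, and we are left with
\begin{equation*}
0=\int_M(\cL_X g,B(g,\Psi_0))\,dv_g
=\int_M 2\nabla^{(a}X^{b)}B_{ab}(g,\Psi_0)\,dv_g
=-2\int_M X^b\,\nabla^a B_{ab}(g,\Psi_0)\,dv_g,
\end{equation*}
where in the last step I integrate by parts (with no boundary contribution since $M$ is closed) and use the symmetry of $B_{ab}$. Because $X$ is an arbitrary smooth vector field, the fundamental lemma of the calculus of variations yields $\nabla^a B_{ab}(g,\Psi_0)=0$, which is \nn{div2}.

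The calculation is a direct transcription of the argument already given for \nn{div}, so the only genuine check — and the one place I would expect a referee to pause — is the legitimacy of the splitting $(D_1\cS)(\cL_X g)+(D_2\cS)(\cL_X\Psi)$ when $\Psi$ may include non-tensorial data such as connections. This is however not a serious obstacle: the separate Frechet differentiability of $\cS$ in $g$ and $\Psi$ is assumed, and the infinitesimal pullback of any geometric object that transforms naturally under diffeomorphisms produces a well-defined element of its formal tangent space (as in \nn{inf}); so the chain rule applies and the proof closes. As in Remark \ref{others}, since $B$ and $E$ are universal expressions in the jets of $g$ and $\Psi$, the identity \nn{div2} is a local one and therefore holds with no compactness or signature hypothesis on $M$.
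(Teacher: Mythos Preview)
Your argument is correct and is essentially identical to the paper's own proof: both derive the generalisation of \nn{main} by differentiating the diffeomorphism invariance of $\cS(g,\Psi)$ (a consequence of \nn{natural2}) via the chain rule, then set $E(g,\Psi_0)=0$ and argue as below \nn{main}. You supply more detail on the integration by parts and on the chain-rule splitting for possibly non-tensorial $\Psi$, but there is no substantive difference in method.
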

\begin{proof} From \nn{natural2} it follows that $S(g,\Psi)$ is
  diffeomorphism invariant. Thus, differentiating $S(g,\Psi)$ along
  the pullback of an infinitesimal diffeomorphism generated by a
  vector field $X$, and using the chain and product rule under the
  integral, we have a generalisation of \nn{main}, viz.\
$$
0=\int_M \Big( (\cL_X g , B (g,\Psi)) + \langle \cL_X \Psi , E(g,\Psi)
\rangle \Big) dv_g ,
$$
where the derivative of \nn{natural2} is used. 
Thus if we calculate along $\Psi_0$ satisfying $E(g,\Psi_0) =0$, then
this reduces to $0=\int_M (\cL_X g , B (g,\Psi)) dv_g$ and we
argue as below \nn{main} to conclude \nn{div2}.
\end{proof}

\begin{remark} The argument above is a minor variant of that in \cite{HE},
  which treats the case that $L$ depends on at most first covariant
  derivatives of $\Psi$. In that setting $E(g,\Psi_0)=0$ gives the
  standard Euler-Lagrange equations of continuum mechanics and they
  term $B(g,\Psi_0)$ an ``energy-momentum tensor''. In certain
  contexts the same $B(g,\Psi_0)$ is sometimes termed a stress-energy
  tensor \cite{B,BE}.
\end{remark}

\subsection{Conformal variations}\label{cvar}

On a manifold $M$, a natural scalar invariant $V$ is said to be 
  {\em conformally variational} within a conformal class of metrics
$\cC=\{\wh{g}=e^{2\Upsilon}g\mid\Upsilon\in C^\infty(M)\}$ if there is a
functional $\cS(g)$ on $\cC$ with 
\begin{equation}\label{Fbul}
\cS^\bullet(g)(\w)=2 \int_M\w  V \,dv_g\,,\qquad\mbox{all }\w\in\ci(M).
\end{equation}
As above $dv_g$ is the Riemannian measure, and here 
\begin{equation}\label{bul}
\cS^\bullet(g)(\w):=\dfrac{d}{d s}\Big|_{s=0}\cS(e^{2 s \w}g).
\end{equation}
In \nn{bul}, the curve of metrics $e^{2 s \w}g$ may be replaced by any
curve with the same initial tangent $g^\bullet=2\w g$.  The property
of being variational can depend both on $L$, and on the conformal class
$\cC$.  

We shall consider first two important cases, with the first case  as follows. 
\begin{definition}\label{cv}
  We shall say that $V$, a natural scalar invariant, is
  \underline{naturally} conformally variational if it arises as in
  \nn{Fbul} above from a Riemannian functional $\cS$ that admits a
  gradient (as in \nn{core}) for any $g\in \cC$.
\end{definition}

Suppose now  $\cS$ is as in Definition \ref{cv} and we calculate
\nn{bul} via a specialisation of the total metric variation
computation \nn{core}.  It follows that
\begin{equation}\label{confv}
\cS^\bullet(g)(\w)=2\int_M (\om g, \el) \,dv_g
\end{equation}
whence, in particular, $V=g^{ab}\el_{ab}$. We summarise this observation.
\begin{lemma}\label{Vori} If $\cS$ is a Riemannian functional with 
  gradient $B_{ab}$ at $g$, then the function $V$
  in \nn{Fbul} is given by $g^{ab}B_{ab}$.
\end{lemma}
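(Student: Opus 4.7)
The proof will be a direct unwinding of definitions; there is really no obstacle, since the statement is essentially an observation comparing two formulas for the same derivative.

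The plan is to specialise the total metric variation formula \nn{core} to the particular infinitesimal variation $h$ arising from a conformal deformation, and then to match the result against the definition \nn{Fbul}. Concretely, as noted immediately after \nn{bul}, the curve $s \mapsto e^{2s\omega} g$ has initial tangent $g^{\bullet} = 2\omega g \in \Gamma(S^2M)$, and more generally $\cS^{\bullet}(g)(\omega)$ only depends on this initial tangent. Hence, applying the gradient formula \nn{core} with $h = 2\omega g$, I would write
\begin{equation*}
\cS^{\bullet}(g)(\omega) \;=\; \cS'(g)(2\omega g) \;=\; \int_M (2\omega g,\, B(g))\, dv_g \;=\; 2\int_M \omega\, g^{ab} B_{ab}\, dv_g,
\end{equation*}
where the final equality uses that the pointwise pairing $(\cdot,\cdot)$ is just tensor contraction, so $(\omega g, B) = \omega\, g^{ab} B_{ab}$.

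Next, I would compare this with the defining relation \nn{Fbul}, which gives
\begin{equation*}
2\int_M \omega V\, dv_g \;=\; 2\int_M \omega\, g^{ab} B_{ab}\, dv_g \qquad \text{for all } \omega \in \ci(M).
\end{equation*}
By the fundamental lemma of the calculus of variations (i.e.\ density of $\ci(M)$ in a suitable test space, together with continuity of both sides as functionals of $\omega$), this pointwise identity $V = g^{ab}B_{ab}$ follows. The only mildly non-trivial point — though it is already supplied by the statement — is that one needs $\cS$ to admit a total-metric gradient in order to evaluate on the specific variation $h = 2\omega g$; this is exactly the hypothesis, so nothing further is needed.
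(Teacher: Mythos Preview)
Your proof is correct and follows essentially the same approach as the paper: the paper specialises the total metric variation formula \nn{core} to $h=2\omega g$, obtains \nn{confv}, and then immediately reads off $V=g^{ab}B_{ab}$ by comparison with \nn{Fbul}. Your version simply makes the final step (the fundamental lemma of the calculus of variations) explicit, which the paper leaves implicit.
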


Recall that for total metric variations the key integral relation
underlying the Hilbert-Noether identity is \nn{main}.   Comparing this with
\nn{confv} we see that, in the restricted setting of conformal
variations, \nn{main} still yields constraints provided $\cL_Xg=2\om
g$. But this exactly means that $X$ is a conformal vector field and
$\om=\frac{1}{n}\div X$. 
Then \nn{main} states
$$
0= \int_M (\div X)V dv_g.
$$
So, integrating by parts, we have the following.
\begin{theorem}\label{ukw}
  If $V$ is naturally conformally variational, then for any conformal vector
  field $X^a$ on a closed Riemannian manifold $(M,g)$, we have
\begin{equation}\label{confN}
0= \int_M (\div X) V dv_g= - \int_M (\cL_X V) dv_g.
\end{equation}
\end{theorem}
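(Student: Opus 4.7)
The plan is to exploit the fact, already noted in Lemma \ref{Vori}, that naturally conformally variational $V$ arises as the metric trace of a gradient tensor $B_{ab}$, and to combine this with the Hilbert--Noether identity \nn{main} applied to a conformal vector field. Since the key integral relation \nn{main} holds for every vector field $X$, specialising $X$ to be conformal should automatically convert it into an identity about $V$ alone.

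Concretely, I would first invoke Definition \ref{cv} and Lemma \ref{Vori} to produce a natural Riemannian functional $\cS$ with gradient $B_{ab}$ at $g$ satisfying $V=g^{ab}B_{ab}$. The diffeomorphism invariance of $\cS$ then yields \nn{main}, namely
\begin{equation*}
0=\int_M (\cL_X g,B(g))\,dv_g
\end{equation*}
for every vector field $X$ on $M$. If $X$ is a conformal vector field, $\cL_X g = 2\om g$ with $\om=\tfrac{1}{n}\div X$, so the integrand collapses to
\begin{equation*}
(\cL_X g,B)=2\om\, g^{ab}B_{ab}=\tfrac{2}{n}(\div X)\,V,
\end{equation*}
and hence $\int_M (\div X)V\,dv_g=0$, which is the first equality in \nn{confN}.

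For the second equality, I would integrate by parts on the closed manifold $M$: since $V$ is a scalar, $\cL_X V = X^a\nabla_a V$, so
\begin{equation*}
\int_M (\div X)V\,dv_g = \int_M \nabla_a(X^a)\,V\,dv_g = -\int_M X^a\nabla_a V\,dv_g = -\int_M (\cL_X V)\,dv_g,
\end{equation*}
with no boundary contribution.

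I do not anticipate any real obstacle, as the heavy lifting has been done by the Hilbert--Noether argument in Section \ref{total} and by Lemma \ref{Vori}; the only subtlety to check is that Definition \ref{cv} genuinely produces a gradient on \emph{all} of $\cM$ (or at least on the conformal class containing $g$) rather than on just conformal directions, so that \nn{main} is available. This is built into the definition, so the theorem reduces to the short calculation above.
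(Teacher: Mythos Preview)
Your proposal is correct and follows essentially the same route as the paper: both specialise the Hilbert--Noether identity \nn{main} to a conformal vector field $X$, use $\cL_X g=\tfrac{2}{n}(\div X)g$ together with $V=g^{ab}B_{ab}$ from Lemma \ref{Vori} to obtain $\int_M(\div X)V\,dv_g=0$, and then integrate by parts. Your closing remark about Definition \ref{cv} guaranteeing a genuine gradient (so that \nn{main} is available) is exactly the point; the paper builds this into the definition for precisely this reason.
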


One might suppose that Definition \ref{cv}, as used in Theorem
\ref{ukw}, is restrictive. In fact in most cases it is not. To make
this precise we need a further definition.  A natural invariant $L$
(possibly tensor valued) is said to have {\em weight} $\ell$ if
uniform dilation of the metric has the effect $L[A^2g]=A^{\ell}L[g]$
for all $0<A\in\bbR$.  For example, the scalar curvature has weight
$-2$. It is not essentially restrictive to consider only invariants of a
well defined weight, since it is easily shown that any natural scalar
invariant is a sum of such. The key to the claim that began this
paragraph is the following result.
\begin{proposition} \cite{BrGovar} \label{self} If~ $V$, of weight
  $\ell\neq -n$, is a conformally variational local scalar invariant on a
  closed Riemannian conformal $n$-manifold $(M,\cC)$, then
\begin{equation}\label{selfe}
\cS(g):=(n+\ell)^{-1}\int_M V dv_g
\end{equation}
is a Riemannian functional for $V$ in $\cC$; that is  \nn{Fbul} holds.
\end{proposition}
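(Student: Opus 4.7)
The plan is to compute $\cS^\bullet(g)(\omega)$ directly from its definition and reduce it to the right-hand side of \nn{Fbul} by means of a symmetry property of the conformal linearization of $V$, which will follow from the existence of any conformally variational primitive. Introducing $V^\bullet(\omega):=\tfrac{d}{ds}|_{s=0}V[e^{2s\omega}g]$ for the conformal linearization, and using $\tfrac{d}{ds}|_{s=0}\,dv_{e^{2s\omega}g}=n\omega\,dv_g$, a Leibniz-rule computation will yield
$$
(n+\ell)\,\cS^\bullet(g)(\omega)\;=\;\int_M V^\bullet(\omega)\,dv_g\;+\;n\!\int_M \omega V\,dv_g,
$$
reducing the entire problem to evaluating $\int_M V^\bullet(\omega)\,dv_g$.

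The central step is the integrated self-adjointness
$$
\int_M \sigma\,V^\bullet(\omega)\,dv_g\;=\;\int_M \omega\,V^\bullet(\sigma)\,dv_g, \qquad \omega,\sigma\in C^\infty(M),
$$
and this is where the conformally variational hypothesis enters. Given any $\tilde{\cS}$ on $\cC$ with $\tilde{\cS}^\bullet(\bar g)(\sigma)=2\int_M \sigma V[\bar g]\,dv_{\bar g}$ for every $\bar g\in\cC$, I would consider the two-parameter family $\bar g_{s,t}:=e^{2s\omega+2t\sigma}g$ and compute $\partial_s\partial_t\big|_{(s,t)=(0,0)}\tilde{\cS}(\bar g_{s,t})$ in both orders. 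In each order one applies the variational identity to carry out one differentiation and then differentiates again using the product rule together with the measure identity above; both routes produce an expression of the form $2\int_M(\cdot)\bigl(V^\bullet(\cdot)+n(\cdot)V\bigr)\,dv_g$. Equality of the two, combined with the manifest symmetry in $(\omega,\sigma)$ of the $n\omega\sigma V$ piece, then yields the displayed self-adjointness.

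To finish, I specialize $\sigma\equiv 1$. The weight hypothesis $V[A^2g]=A^\ell V[g]$, taken with $A=e^s$, immediately gives $V^\bullet(1)=\ell V$, whence
$$
\int_M V^\bullet(\omega)\,dv_g\;=\;\int_M \omega\,V^\bullet(1)\,dv_g\;=\;\ell\!\int_M \omega V\,dv_g.
$$
Substituting into the first display produces $(n+\ell)\,\cS^\bullet(g)(\omega)=(n+\ell)\int_M \omega V\,dv_g$, and the hypothesis $n+\ell\neq 0$ lets us divide to recover \nn{Fbul} (up to the normalization convention adopted there). The hard part will be the middle paragraph — the interchange of mixed partials — which requires only that $\tilde{\cS}$ be $C^2$ along conformal variations, and which is the single point at which the existence of the primitive $\tilde{\cS}$ actually enters; the remainder of the argument is a routine application of the product and chain rules together with the defining property of the weight.
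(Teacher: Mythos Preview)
The paper does not give its own proof of this proposition; it is simply quoted from \cite{BrGovar}. Your argument is correct and is in fact the standard one from that reference: the key equivalence ``$V$ conformally variational $\Longleftrightarrow$ $V^\bullet$ formally self-adjoint'' (which you derive via equality of mixed second partials of the primitive $\tilde{\cS}$) is precisely the content of \cite[Lemma~2]{BrGovar}, and the present paper itself invokes that equivalence elsewhere (proof of Theorem~\ref{bway}, Remark~\ref{215}). Once self-adjointness is in hand, specialising $\sigma\equiv 1$ and using the weight to get $V^\bullet(1)=\ell V$ is exactly the intended move.

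One small observation: your final answer $\cS^\bullet(g)(\omega)=\int_M \omega V\,dv_g$ differs from \eqref{Fbul} by the factor of $2$. This is not an error on your part but a genuine normalization mismatch in the paper's statement---the functional $(n+\ell)^{-1}\int_M V\,dv_g$ really does yield $\int_M \omega V\,dv_g$, not $2\int_M \omega V\,dv_g$---so your parenthetical ``up to the normalization convention'' is the correct disclaimer. The factor is irrelevant for every application made of the proposition in the paper.
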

Now by the discussion of natural Lagrangians in Section \ref{total}, it
follows that \nn{core} holds for $\cS$ as in \nn{selfe}, and so $\cS$
satisfies Definition \ref{cv}. Thus we have the following.
\begin{corollary}\label{aa} On a closed Riemannian $n$-manifold a 
natural scalar invariant ~$V$, of weight $\ell\neq -n$,
  is conformally variational if and only if it is naturally
  conformally variational.
\end{corollary}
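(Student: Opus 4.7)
The plan is to establish both implications. The forward direction, that naturally conformally variational implies conformally variational, is essentially a tautology from the definitions: a Riemannian functional $\cS$ as in Definition \ref{cv} restricts to a functional on the conformal class $\cC$ still satisfying \nn{Fbul}, so no work is required.

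For the reverse implication, assume $V$ is a natural scalar invariant of weight $\ell \neq -n$ that is conformally variational. The strategy is to exhibit an explicit Riemannian functional meeting the conditions of Definition \ref{cv} and deriving $V$ via \nn{Fbul}. The obvious candidate is provided by Proposition \ref{self}, namely
\begin{equation*}
\cS(g) := (n+\ell)^{-1}\int_M V \, dv_g,
\end{equation*}
which is defined on all of $\cM$ (not just on $\cC$) and, by that proposition, already satisfies \nn{Fbul}. It remains to verify the two conditions implicit in Definition \ref{cv}: diffeomorphism invariance \nn{Rinv} and existence of a metric gradient in the sense of \nn{core}.

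Diffeomorphism invariance follows at once from the naturality \nn{natural} of $V$ combined with the naturality of the Riemannian volume measure. For the gradient, I would appeal to Remark \ref{natc}: since $V$ is a universal polynomial contraction built from Levi-Civita covariant derivatives of the Riemann tensor, any directional derivative of $\cS$ along $h \in \Gamma(S^2M)$ is again a natural polynomial integral in the jets of $g$ and $h$. Successive integrations by parts move all derivatives off $h$ and expose a natural symmetric 2-tensor $B(g)$ realising \nn{core}; this is precisely the observation recorded at the end of Section \ref{total} for natural Lagrangians. Hence $\cS$ fulfills Definition \ref{cv}, and $V$ is naturally conformally variational.

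The only genuine subtlety lies in the hypothesis $\ell \neq -n$, which is indispensable: it is exactly what makes the normalising prefactor $(n+\ell)^{-1}$ in Proposition \ref{self} meaningful and guarantees that the resulting $\cS$ reproduces $V$ through \nn{Fbul}. Beyond this, no obstruction arises, as the whole argument reduces to combining Proposition \ref{self} with the standard naturality-plus-integration-by-parts calculation already carried out for total metric variations in Section \ref{total}.
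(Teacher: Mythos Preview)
Your proposal is correct and follows essentially the same route as the paper: the nontrivial implication is handled by invoking Proposition \ref{self} to produce the explicit functional $\cS(g)=(n+\ell)^{-1}\int_M V\,dv_g$, and then appealing to the natural-Lagrangian discussion of Section \ref{total} to see that this $\cS$ admits a gradient in the sense of \nn{core}, hence satisfies Definition \ref{cv}. The paper's argument is precisely the sentence preceding the corollary, and your write-up simply unpacks that sentence with a bit more detail.
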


The scalar curvature is well known to be conformally variational and
so Theorem \ref{ukw} certainly extends the results of Bourguignon-Ezin
\cite{BoE} for the scalar curvature in dimensions $n\geq 3$. In fact
conformally variational invariants are not at all rare, and so the
extension is vast; we shall take up this point in Section
\ref{exs}.

Next we show that a slight variant of the above also recovers and extends
the identity from \cite{BoE,KW74} for the Gauss curvature in
dimension 2. Above we used that it is insightful to use the gradient
$B$ when this is available. That observation will also be critical in
the next section. However the existence of a total metric variation
gradient, as in \nn{core}, is not necessary to see a Kazdan-Warner
type identity arise from gauge invariance.
\begin{definition}\label{nocv}
  We shall say that $V$, a natural scalar invariant, is
  \underline{normally} conformally variational if it arises via \nn{Fbul} 
with $\cS$ a Riemannian functional.
\end{definition}
Note that this is a strictly broader class of invariants than above:
if $V$ is naturally conformally variational then it is normally
conformally variational.
\begin{theorem}\label{nocvt}
  The identity \nn{confN} holds if we assume only that $V$ is normally
  conformally variational (with also the other conditions of Theorem
  \ref{ukw} imposed).
\end{theorem}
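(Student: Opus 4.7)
The plan is to exploit the fact that a conformal Killing vector field $X$ generates a flow by \emph{conformal} diffeomorphisms, so its flow curve based at $g$ remains entirely within the conformal class $\cC$; Riemannian invariance of $\cS$ along this curve then translates directly, via \nn{Fbul}, into the desired integral identity.

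First I would let $\phi_t$ denote the (local) flow of $X$ and observe that $\phi_t^* g \in \cC$. Indeed, since $\cL_X g = 2\om g$ with $\om = \tfrac{1}{n}\div X$, one has
\[
\tfrac{d}{dt}(\phi_t^* g) \;=\; \phi_t^*(\cL_X g) \;=\; 2(\om\circ \phi_t)\,\phi_t^* g,
\]
so writing $\phi_t^* g = e^{2\om_t} g$ produces the explicit conformal factor $\om_t(x) = \int_0^t \om(\phi_s(x))\,ds$, which is smooth in $(t,x)$ with $\om_0 = 0$ and $\tfrac{d}{dt}|_{t=0}\om_t = \om$.

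Next, by Riemannian invariance \nn{Rinv} we have $\cS(\phi_t^* g) = \cS(g)$, hence $0 = \tfrac{d}{dt}|_{t=0}\cS(\phi_t^* g)$. Since the curve $t\mapsto \phi_t^* g$ lies in $\cC$ with initial tangent $2\om g$, the remark following \nn{bul} identifies this derivative with $\cS^\bullet(g)(\om)$. Applying \nn{Fbul} then yields
\[
0 \;=\; \cS^\bullet(g)(\om) \;=\; 2\int_M \om\, V\,dv_g \;=\; \tfrac{2}{n}\int_M (\div X)\,V\,dv_g .
\]
The equivalent Lie derivative form in \nn{confN} follows by combining $\int_M \cL_X(V\,dv_g)=0$ (Stokes' theorem on the closed $M$) with $\cL_X(V\,dv_g) = (\cL_X V)\,dv_g + V(\div X)\,dv_g$.

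The only point meriting real care is the claim that $\phi_t^* g$ lies in $\cC$ with a smoothly varying conformal factor, and that is immediate from the defining property of a conformal vector field. Notably, no total metric gradient is invoked and no direction transverse to $\cC$ is used; this is precisely why the weaker hypothesis of Definition \ref{nocv} is sufficient, in contrast to Theorem \ref{ukw}, whose argument passes through a gradient and thus needs the naturally-variational hypothesis of Definition \ref{cv}.
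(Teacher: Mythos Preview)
Your proof is correct and follows essentially the same approach as the paper: both use that the flow of a conformal vector field $X$ pulls $g$ back along a curve in $\cC$, so diffeomorphism invariance of the Riemannian functional $\cS$ forces $0=\cS^\bullet(g)(\tfrac{1}{n}\div X)$, and \nn{Fbul} then gives \nn{confN}. Your version spells out the flow and the conformal factor $\om_t$ explicitly, whereas the paper works directly with the infinitesimal variation $h=\cL_X g=\tfrac{2}{n}(\div X)g$ and the identification $\cS'(g)(h)=\cS^\bullet(g)(\tfrac{1}{n}\div X)$, but the content is the same.
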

\begin{proof}
  We follow the idea of Section \ref{total}, but restrict at the
  outset to the case that $X$ is a conformal vector field. Again from
  the diffeomorphism invariance of the Riemannian functional we have
  $0=\cS'(g)(h)$ where $h=\cL_X g$. But $h$ is a conformal variation:
  $h=\frac{2}{n}(\div X) g$. So $\cS'(g)(h)=\cS^\bullet(g)(
  \frac{1}{n}\div X)$ and since, by assumption, $V$ and $\cS$ are related
  by \nn{Fbul} the result follows.
 \end{proof}

 \begin{example}\label{poly} On a closed Riemannian 2-manifold 
   $(M,g)$ if we take $\cS(g):=\det \Delta_{g}/ A(g)$, where $A(g)$ is
   the total area and $\det \Delta_{g}$ is the functional determinant
   of the Laplace-Beltrami operator, then there is the Polyakov
   formula \cite{OPS,P} for conformal variation
$$
\cS^\bullet(g)(\om)=c\cdot \int_M \om Q dv_g
$$
where $Q$ is the Gauss curvature and $c\neq 0$ is a constant. $\cS(g)$
is a Riemannian functional and so we conclude from Theorem \ref{nocvt}
that for any conformal vector field $X$ on $M$ we have $\int_M \cL_X
Q~dv_g =0$.
\end{example}

\begin{remark}
  In view of the derivations in Theorems \ref{ukw} and \ref{nocvt} it
  is clear that the Kazdan-Warner identities are related to
  Noether-Hilbert principles. Note here we do not expect an analogue of
  \nn{div}: The result here is necessarily global, since the common
  ground between \nn{main} and \nn{confv} involves conformal vector
  fields which are global objects.
\end{remark}

From the proof of Theorem \ref{nocvt} it is evident that we may obtain
an identity at a particular $g_1\in \cC$ without the full force of
\nn{Rinv}. Indeed we simply need $\cS'(g_1)(h)=0$ where $h$ is $\cL_X
g_1$ and $X$ a conformal vector field. If $V(g)$ is a conformally
variational natural invariant this is achieved by the functional
$\cS(g)=\int_M \om V(g) dv_g$ on $\C$, where $g=e^{2\om}g_1$, $\om \in
C^\infty (M)$. This follows from the following argument, which is a trivial 
adaption of a result from \cite{Bo86,delRob}.
\begin{theorem}\label{bway} Suppose that $X$ is a conformal vector 
  field on a closed Riemannian conformal manifold $(M,\cC)$, and that
  $V=V(g)$ is a conformally variational natural scalar invariant. Then
$$
\int_M (\div X) V(g) ~dv_g .
$$
is independent of the choice of metric $g\in \cC$, and hence is zero. 
\end{theorem}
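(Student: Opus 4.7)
Following the approach of \cite{Bo86,delRob}, the plan is to re-express the integral $I(g) := \int_M (\div_g X) V(g)\, dv_g$ as the evaluation of a canonical $1$-form on the conformal class $\cC$ against the infinitesimal generator of the flow of $X$, and then to extract its vanishing via Lelong-Ferrand-Obata rigidity. Concretely, at $g\in\cC$ the tangent space $T_g\cC$ consists of sections $\dot g = 2\om g$, and one defines
$$
\alpha_g(\dot g) := 2\int_M \om\, V(g)\, dv_g.
$$
The hypothesis that $V$ is conformally variational is, by \nn{Fbul}, precisely that $\alpha = d\cS$ for some functional $\cS : \cC \to \bbR$. Naturality of $V$ gives $V(\phi^* g) = \phi^* V(g)$ and $dv_{\phi^* g} = \phi^* dv_g$, so a straightforward change of variables in the integrand yields $\phi^*\alpha = \alpha$ for every $\phi$ in the conformal group $\mathrm{Conf}(M,\cC)$.

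Consequently $d(\phi^*\cS - \cS) = \phi^*\alpha - \alpha = 0$, and since $\cC$ is path-connected the quantity $c(\phi) := \phi^*\cS - \cS$ is a constant in $g$. A standard cocycle computation (using $(\phi\circ\ps)^* = \ps^*\circ\phi^*$) shows $c : \mathrm{Conf}(M,\cC) \to \bbR$ is a continuous group homomorphism. Specialising to the flow $\phi_t$ of the conformal vector field $X$, continuity forces $c(\phi_t) = t\,\dot c(X)$, and using $\cL_X g = \tfrac{2}{n}(\div X)\, g$ together with $d\cS = \alpha$ gives
$$
\dot c(X) \;=\; \frac{d}{dt}\Big|_{t=0}\cS(\phi_t^* g) \;=\; \alpha_g(\cL_X g) \;=\; \frac{2}{n}\,I(g).
$$
Since the left-hand side depends only on the $1$-parameter subgroup of $\mathrm{Conf}(M,\cC)$ generated by $X$ and not on the base-point $g$, this establishes the asserted $g$-independence.

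To conclude that $I(g) = 0$ I would invoke Lelong-Ferrand-Obata. If $(M,\cC)$ is not conformally equivalent to the round sphere, then $\mathrm{Conf}(M,\cC)$ is compact, and any continuous homomorphism from a compact group into $\bbR$ is trivial. In the sphere case, the identity component of $\mathrm{Conf}(M,\cC)$ is $SO_0(n+1,1)$, whose Lie algebra is simple (hence perfect) for $n\geq 2$, so every continuous homomorphism into the abelian group $\bbR$ vanishes on that identity component, in particular on the $1$-parameter subgroup generated by $X$. In both cases $\dot c(X) = 0$, so $I(g) \equiv 0$. The main subtlety sits precisely in this last step: the variational/naturality machinery alone pins down $I(g)$ only up to an additive character of $\mathrm{Conf}(M,\cC)$, and showing that character is trivial is a purely group-theoretic fact requiring LFO as input; this is also why this route succeeds in the critical settings where the more direct arguments behind Theorems \ref{ukw} and \ref{nocvt} are not available.
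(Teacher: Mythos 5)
Your first half is essentially the paper's argument in integrated form: the paper works with the closed, conformal-group-invariant $1$-form $\om\mapsto\int_M\om V(g)\,dv_g$ on $\cC$ and applies the Cartan formula, whereas you use its exactness $\alpha=d\cS$ (immediate from \nn{Fbul}) and differentiate the resulting cocycle $c(\phi)=\phi^*\cS-\cS$; the invariance computation $\phi^*\alpha=\alpha$, the constancy of $c(\phi)$ on the connected space $\cC$, and the identity $\dot c(X)=\tfrac{2}{n}\int_M(\div X)V\,dv_g$ are all correct, and this does establish the $g$-independence by the same mechanism as the paper. Where you genuinely depart from the paper is the endgame: the paper evaluates the constant at a distinguished metric (on the round sphere, where $V$ is constant by naturality and homogeneity; otherwise at the Lelong--Ferrand--Obata invariant metric, for which $X$ is Killing and $\div X=0$), while you argue that the additive character $c$ of $\operatorname{Conf}(M,\cC)$ must vanish. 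For the sphere this works, and in fact more cheaply than you state: the identity component of the conformal group is a connected semisimple Lie group, hence perfect as an abstract group, so \emph{every} homomorphism to $\bbR$ kills it, continuity not needed.

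The gap is in the compact (non-sphere) case. Your phrase ``a standard cocycle computation shows $c$ is a \emph{continuous} group homomorphism'' is not right: the cocycle computation gives the homomorphism property only, and continuity of $c$ on $\operatorname{Conf}(M,\cC)$ does not follow from the hypotheses as stated, since being conformally variational only provides a functional $\cS$ with prescribed Gateaux derivatives in conformal directions, not continuity of $\cS$ on $\cC$. What you do get for free is continuity (indeed differentiability) of $t\mapsto c(\phi_t)$ along the flow, but that alone is insufficient for your group-theoretic conclusion: a compact torus admits abstract (discontinuous) characters into $\bbR$ that restrict to $t\mapsto\lambda t$ with $\lambda\neq 0$ on a dense one-parameter subgroup, so ``character of a compact group'' cannot be dismissed without genuine continuity on (at least the closure of) the subgroup $\{\phi_t\}$. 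The step can be repaired, e.g.\ by restricting to the finite-dimensional orbit $\phi\mapsto\phi^*g_0$ and noting that $\cS$ along it is a primitive of the continuous pulled-back $1$-form, hence locally Lipschitz and continuous, so the character of the compact closure is continuous and therefore trivial; but the simpler and cleaner fix is the paper's: by Lelong--Ferrand--Obata there is $g_1\in\cC$ preserved by the conformal group, so $\div_{g_1}X=0$ and the integral vanishes at $g_1$, and your already-proved $g$-independence finishes the proof.
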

\begin{proof}
  Fix any metric $g_0\in \cC$. If $V$ is conformally variational then
  the linearisation of the map $\om \mapsto V(e^{2\om} g_0)$, $\om\in
  C^\infty(M)$, is formally-self-adjoint (see e.g.\ \cite{BrGovar}).
Identifying
  $C^\infty (M)$ with the tangent space to $\cC$, it follows that the
  1-form on $\cC$
$$
C^\infty (M)\ni \om \mapsto \int_M \om V(g)~ dv_g
$$
is closed \cite{Bo86,BrGovar}. Now suppose that $\tilde{X}$ is the vector
field on $\cC$ induced by a conformal diffeomorphism $X$ on $M$. From
the diffeomorphism invariance of this 1-form it is annihilated by
$\cL_{\tilde{X}}$. Then using the Cartan formula $\cL_{\tilde{X}}=
d\i_{\tilde{X}} +\i_{\tilde{X}} d$, and the identification of
$\tilde{X}$ with $\frac{1}{n}\div X$, it follows that $ \int_M (\div
X)V(g) ~dv_g $ is constant on $\cC$ as claimed.

It follows that if there is a metric $g_0\in \cC$ such that $V(g_0)$
is constant then for any metric $g\in \cC$ we have
$$
\int_M \cL_X V ~dv_g=0.
$$
In particular this holds on the sphere $S^n$ with its standard
conformal structure. However by the Lelong-Ferrand-Obata theorem \cite{L-F,O}
if $M$ is any other conformal manifold then $X$ is necessarily a
Killing vector field. In that case we have $\cL_X V =\div (V X)$ and so 
$\int_M \cL_X V ~dv_g=0$.
\end{proof}

\begin{remark}\label{215} 
  While this Theorem gives the strongest result, it uses a less direct
  argument than that of Theorems \ref{ukw} and \ref{nocvt}, and this
  argument partly loses contact with the Hilbert-Noether principles,
  and in most cases is not necessary (as follows from Corollary
  \ref{aa}). Most importantly, as we shall see below, the proof of
  Theorem \ref{ukw} naturally suggests, and links it to, a generalisation of the
  Schoen identity.

  On the other hand for natural invariants of weight $-n$ we expect to
  need stronger arguments: for example if $V$ is a conformal covariant
  of weight $-n$, then $V dv_g $ is a conformally invariant $n$-form
  and so $\int_M V dv_g$ is conformally invariant. It is easily seen
  that such a matching of weights between $dv_g$ and $V$ causes a
  breakdown in the argument of Theorem \ref{ukw}.

  The result in \cite{delRob} corresponding to Theorem \ref{bway} uses
  that the linearisation of $\om \mapsto V( e^{2\om}g)$ is formally
  self adjoint, without any explicit mention that $V$ is
  variational. But for a natural scalar invariant this
  self-adjointness condition is equivalent to it being conformally
  variational, as follows from a trivial variant of \cite[Lemma
  2(ii)]{BrGovar}.
\end{remark}

\medskip

\section{Manifolds with boundary and conservation} \label{bc}

Let $M$ be Riemannian manifold with boundary $\partial M$. To avoid
unnecessary restriction we allow here the possibility that $\partial
M$ is the empty set.  In this setting, and using a different approach
to the above, we derive a result that strictly generalises 
Theorem \ref{ukw} and the Schoen identity \nn{sch}. 

\subsection{A generalisation of the Schoen identity}\label{subsch}
On $M$, let $B$ be a symmetric 2-tensor with compact support, 
and $X$ any tangent vector field. 
Then by the Gauss formula for Stokes' Theorem,
$$
\int_M  \nabla^a(B_{ab}X^b) dv_g=\int_{\partial M} B_{ab} X^a \nu^b d\sigma_g,
$$
where $\nu$ and $d\sigma_g$ are, respectively, the outward unit normal
and the induced metric measure along $\partial M$.

Now if the tensor $B$ is locally conserved, meaning that $ \nabla^a
B_{ab}=0 $, then
\begin{equation}\label{key1}
2 \nabla^a(B_{ab}X^b)=2 B_{ab}\nabla^a X^b =(B,\cL_X g).
\end{equation}
In particular if $X$ is a conformal vector field then 
$$
\nabla^a(B_{ab}X^b) =\frac{1}{n} V \div X
$$
where $V$ is the metric trace of $B$, i.e.\ $V:=g^{ab}B_{ab}$, and
$\div X =\nabla_a X^a$. 
So 
\begin{equation}\label{prelim}
n\int_{\partial M} B_{ab} X^a \nu^b d\sigma_g= \int_M V (\div X ) dv_g~.
\end{equation} 

A related identity arises from the (metric) trace-free part of $B_{ab}$, 
that is 
$$
\BO_{ab}:= B_{ab}-\frac{1}{n}g_{ab}V.
$$
If $X$ is a conformal Killing vector field then 
$$
\nabla^a(\BO_{ab} X^b)= (\nabla^a\BO_{ab})X^b + \BO_{ab}\nabla^a X^b.
$$
But then $\BO_{ab}\nabla^a X^b=0$, since $\BO_{ab}$ is symmetric
trace-free, while $\frac{1}{2}(\nabla^a X^b+(\nabla^b X^a)= \frac{1}{n}g^{ab}\div
X$. For the other term observe that
$$
X^b\nabla^a\BO_{ab}= X^b(\nabla^aB_{ab}-\frac{1}{n}\nabla_b V)=-\frac{1}{n}X^b\nabla_b V.
$$
Thus 
$$
\begin{array}{rl}
\int_M \cL_X V~ dv_g & =  - n\int_{M} \nabla^a(\BO_{ab} X^b) dv_g\\
&= - n\int_{\partial M} \BO_{ab} X^a \nu^b d\sigma_g
\end{array}
$$

\smallskip

\noindent Recalling also \nn{div}, we summarise as follows.
\begin{theorem}\label{maini} On an oriented  Riemannian manifold $M$ with
  boundary $\partial M$ the following holds.  If $B$ is a locally
  conserved symmetric 2-tensor, of compact support, and $X$ is a
  conformal vector field, then
\begin{equation}\label{b}
\int_M \cL_X V ~ dv_g= - n \int_{\partial M} \BO_{ab} X^a \nu^b d\sigma_g,
\end{equation}
where $V$ is the metric trace of $B$, i.e.\ $V=g^{ab}B_{ab}$.  In
particular this holds for any gradient tensor or generalised energy-momentum tensor $B$
that has compact support.
\end{theorem}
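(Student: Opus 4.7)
The plan is to apply Stokes' theorem to a cleverly chosen vector field and exploit the two structural hypotheses: local conservation $\nabla^a B_{ab}=0$, and the conformal Killing equation on $X$. The key choice is to apply the Gauss formula not to $B_{ab}X^b$ (which gives \nn{prelim}) but to its trace-free counterpart $\BO_{ab}X^b$, where $\BO_{ab}=B_{ab}-\frac{1}{n}g_{ab}V$. The calculation immediately preceding the statement already carries this out; I would simply assemble it cleanly as follows.

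First, by the compact-support hypothesis and the Gauss formula,
$$
\int_M \nabla^a(\BO_{ab}X^b)\,dv_g \;=\; \int_{\partial M}\BO_{ab}X^a\nu^b\,d\sigma_g.
$$
Expanding the integrand on the left by the Leibniz rule, I would handle the two pieces separately. For $\BO_{ab}\nabla^a X^b$: since $\BO_{ab}$ is symmetric, only the symmetric part of $\nabla^a X^b$ contributes, and the conformal Killing condition forces this to be the pure-trace tensor $\frac{1}{n}(\div X)g^{ab}$; contracting with the trace-free $\BO_{ab}$ then gives $0$. For $X^b\nabla^a\BO_{ab}$: using $\nabla^a B_{ab}=0$ and the definition of $\BO$ yields $\nabla^a\BO_{ab}=-\frac{1}{n}\nabla_b V$, and hence $X^b\nabla^a\BO_{ab}=-\frac{1}{n}\cL_X V$ (since $V$ is a scalar). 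Multiplying the resulting identity through by $-n$ gives the claimed formula.

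For the final sentence of the theorem, I would observe that gradients of Riemannian functionals satisfy $\nabla^a B_{ab}=0$ by \nn{div}, and that generalised energy-momentum tensors (metric gradients $B(g,\Psi_0)$ evaluated on solutions of $E(g,\Psi)=0$) satisfy the same conservation law by Theorem \ref{gg}; so in both cases the hypothesis of the theorem is met whenever $B$ additionally has compact support.

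There is essentially no analytic obstacle here: once the trace-free truncation $\BO_{ab}X^b$ is selected as the object to integrate by parts against, the proof is a two-line Leibniz expansion plus two vanishing-by-symmetry arguments. The only conceptual subtlety worth flagging is that the same construction applied to the full tensor $B_{ab}X^b$ yields instead $\int_M V(\div X)\,dv_g = n\int_{\partial M}B_{ab}X^a\nu^b\,d\sigma_g$; the two versions together exhaust the natural splitting of $B$ into its trace and trace-free parts and recover both \nn{prelim} and \nn{b}. This is precisely why, on a closed manifold, one recovers Theorem \ref{ukw} from the trace part, while the trace-free part generalises Schoen's identity \nn{sch} to the boundary setting.
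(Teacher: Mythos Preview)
Your proposal is correct and follows essentially the same route as the paper: apply the Gauss formula to the trace-free contraction $\BO_{ab}X^b$, expand by Leibniz, kill $\BO_{ab}\nabla^a X^b$ via the symmetric trace-free/pure-trace orthogonality, and reduce $X^b\nabla^a\BO_{ab}$ to $-\tfrac{1}{n}\cL_X V$ using $\nabla^a B_{ab}=0$. Your remark that the companion computation with $B_{ab}X^b$ yields \nn{prelim}, and your justification of the final sentence via \nn{div} and Theorem~\ref{gg}, also match the paper's discussion.
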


In particular the above applies when $\partial M=\emptyset$. Thus we have 
another  Kazdan-Warner type result. For emphasis we state this specialisation.
\begin{corollary}\label{kwa}
  On a Riemannian manifold $M$, without boundary, let $V$ be the metric trace of
   a compactly supported and locally conserved symmetric 2-tensor $B$.
Then for any conformal vector field $X$ we have 
$$
\int_M \cL_X V dv_g=0.
$$
\end{corollary}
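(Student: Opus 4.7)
The plan is to obtain Corollary \ref{kwa} as a direct specialisation of Theorem \ref{maini} to the case $\partial M=\emptyset$. In \eqref{b} the right-hand side is a boundary integral; if $\partial M$ is empty there is nothing to integrate over, so the right-hand side vanishes identically and we read off $\int_M \cL_X V\, dv_g=0$. The compact support hypothesis on $B$ guarantees that the derivation of \eqref{b} goes through without decay issues at infinity in the non-compact case, which is precisely what is needed in order to apply Stokes' theorem to the divergence that produced \eqref{b} in the first place.

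For completeness, I would note that one may equally well give a self-contained derivation by reproducing the short computation preceding Theorem \ref{maini}. Namely, decompose $B_{ab} = \BO_{ab} + \tfrac{1}{n} g_{ab} V$ with $\BO_{ab}$ trace-free; the conservation law $\nabla^a B_{ab}=0$ then gives $\nabla^a\BO_{ab} = -\tfrac{1}{n}\nabla_b V$. Since $X$ is conformal, $\nabla^{(a}X^{b)}=\tfrac{1}{n}g^{ab}\div X$, and pairing this with the symmetric trace-free tensor $\BO_{ab}$ yields zero. Consequently
\[
\nabla^a(\BO_{ab}X^b) \;=\; (\nabla^a\BO_{ab})X^b + \BO_{ab}\nabla^a X^b \;=\; -\tfrac{1}{n}X^b\nabla_b V \;=\; -\tfrac{1}{n}\cL_X V,
\]
the last equality because $V$ is a scalar. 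Integrating over $M$, the compact support of $B$ (hence of $\BO$ and $V$) and the absence of a boundary let us discard all boundary terms via Stokes, giving $\int_M \cL_X V\, dv_g = 0$.

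There is no genuine obstacle here, since all the work has been done in Theorem \ref{maini}; the only thing to verify is that the compact support hypothesis supplies what the boundary previously supplied, so that the Stokes-type manipulation producing \eqref{key1} and its trace-free analogue remain valid on a possibly non-compact $M$ with $\partial M=\emptyset$.
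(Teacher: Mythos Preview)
Your proposal is correct and matches the paper's approach exactly: the paper presents Corollary \ref{kwa} as the specialisation of Theorem \ref{maini} to the case $\partial M=\emptyset$, and your self-contained derivation simply reproduces the trace-free computation immediately preceding Theorem \ref{maini}. The remark about compact support replacing the role of the boundary in the Stokes argument is apt and makes explicit what the paper leaves implicit.
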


Using Proposition \ref{self} and the result \nn{confv}, Theorem
\ref{maini} also gives the following result.
\begin{corollary} \label{ci} Suppose that the natural scalar invariant
  $V$ is naturally conformally variational on a compact $n$-manifold with
  boundary. Then $V=g^{ab}B_{ab}$ where $B_{ab}$ is a natural gradient of
  some Riemannian functional, and
  the relation \nn{b} holds for any conformal vector field $X$. If $V$
  has a well defined weight $\ell\neq -n$, then $B$ is gradient of the
  functional
$$
\cS (g) =\frac{1}{n+\ell}\int_M V dv_g.
$$
\end{corollary}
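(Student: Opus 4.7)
The plan is to assemble the Corollary directly from the machinery already laid out earlier in the excerpt. To begin, by Definition \ref{cv} the hypothesis that $V$ is naturally conformally variational provides a Riemannian functional $\cS$ which admits a total-metric-variation gradient $B=B(g)$ as in \nn{core}, and which recovers $V$ through \nn{Fbul}. Lemma \ref{Vori} applied to this gradient immediately yields $V=g^{ab}B_{ab}$, giving the trace identification asserted in the first sentence.

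Next I would verify that $B$ is a natural $2$-tensor invariant and is locally conserved, so that Theorem \ref{maini} can be applied. Naturality follows from the paragraph preceding Remark \ref{others}: a natural scalar Lagrangian is polynomial in complete contractions of the curvature and its covariant derivatives (Remark \ref{natc}), and integration by parts in any directional derivative of $\cS$ realises \nn{core} with a natural tensor-valued invariant $B$. Local conservation of $B$ is precisely the Hilbert--Noether identity \nn{div}, which by Remark \ref{others} holds pointwise on $M$ irrespective of the presence or absence of boundary. Since $M$ is compact, $B$ is automatically of compact support, so Theorem \ref{maini} applies and delivers the integral identity \nn{b} for every conformal vector field $X$.

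It remains to pin down the explicit functional in the case of weight $\ell\neq -n$. For this I would invoke Proposition \ref{self}: with $V$ a conformally variational local scalar of weight $\ell\neq -n$, the functional $\cS(g)=(n+\ell)^{-1}\int_M V\,dv_g$ is a Riemannian functional for $V$ in the sense of \nn{Fbul}. Because its Lagrangian $(n+\ell)^{-1}V$ is itself a natural scalar invariant, the discussion of Section \ref{total} shows $\cS$ admits a natural gradient $B$ as in \nn{core}, and Lemma \ref{Vori} then forces $g^{ab}B_{ab}=V$ for this particular $B$. Thus this specific $\cS$ fulfils Definition \ref{cv}, and its gradient serves as the $B$ appearing in \nn{b}.

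No serious obstacle is anticipated; the proof amounts to routing through Definition \ref{cv}, Lemma \ref{Vori}, Proposition \ref{self}, and Theorem \ref{maini}. The one point worth flagging is that in the first part of the statement the Riemannian functional is not canonical, so a priori one could obtain different admissible gradients $B$. Any such $B$ is, however, natural, locally conserved, and has metric trace $V$, so each independently satisfies the hypotheses of Theorem \ref{maini}; the stated conclusion \nn{b} is therefore unambiguous, and the second assertion simply singles out a canonical choice of $B$ once the weight hypothesis is in force.
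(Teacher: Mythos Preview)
Your argument is correct and follows essentially the same route as the paper, which simply records before the Corollary that it is obtained ``using Proposition~\ref{self} and the result \nn{confv}, Theorem~\ref{maini}''. You have merely made the routing explicit (Definition~\ref{cv} $\to$ Lemma~\ref{Vori}/\nn{confv} $\to$ \nn{div} via Remark~\ref{others} $\to$ Theorem~\ref{maini}, with Proposition~\ref{self} for the weight clause), and your remark that \nn{div} transfers to the manifold with boundary because $B$ is given by a universal natural formula is exactly the point the paper relies on implicitly.
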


\medskip

A special case of Theorem \ref{maini} arises 
when $B_{ab}$ is (a non-zero multiple of) the {\em Einstein tensor}
$$
B_{ab}:=P_{ab}-g_{ab}J,
$$
which is the gradient arising from the Einstein-Hilbert action;
here $n\geq 3$ and we assume compact support.  So then $V=(1-n)J$.
Here $P_{ab}$ is the Schouten tensor and $J=g^{ab}P_{ab}$; in terms of
the Ricci and scalar curvatures, this is characterised by
$$
\Ric_{ab}=(n-2)P_{ab}+ J g_{ab}, 
$$
whence $\Sc = 2(n-1)J$.
So then ($2\times$) \nn{b} states
$$
2(1-n) \int_M \cL_X J ~ dv_g =- 2n\int_{\partial M} P_{(ab)_0}X^a \nu^b d\sigma_g,
$$
where $(\cdots)_0$ indicates the trace-free symmetric part.
In other terms we obtain, 
$$
 \int_M \cL_X \Sc dv_g = \frac{2n}{n-2}\int_{\partial M} \Ric_{(ab)_0}X^a \nu^b d\sigma_g,
$$
as a special case of Theorem \ref{maini}. This is precisely the Schoen
identity \nn{sch} from the introduction.

\begin{remark}
  The identity \nn{prelim} is widely used in the literature, see e.g.\
  \cite{B,BE,BR,ezin,OP} and references therein. 
\end{remark}

\begin{remark}
  Theorem \ref{maini} produces a Schoen-type identity for every
  locally conserved symmetric 2-tensor, and thus in particular for
  every natural gradient, or generalised energy-momentum tensor.  The
  surprising aspect of the Theorem is that it provides a rather subtle
  global relation between the trace and trace-free parts of a locally
  conserved 2-tensor. 
\end{remark}

\subsection{Conserved quantities }\label{consS}
A Killing vector field $X$ is of course also a conformal Killing
vector field.  However Theorem \ref{ukw} and Corollary \ref{kwa} are
vacuous for such $X$: if $X$ is a Killing vector then for any function
$f$ on a closed Riemannian manifold $M$ we have $\int_M \cL_X
f~dv_g=0$, since $\div X=0$ and so $\cL_X f= \div (f X)$.  In both
Theorem \ref{ukw} and Corollary \ref{kwa} the function $V$ is the
trace of a locally conserved symmetric 2-tensor $B$. Thus these
results are also obviously vacuous if in fact $B$ is trace-free, so
$B=\BO$, even if $X$ is not Killing.  

It is natural to ask of the meaning of the corresponding
Pohozaev-Schoen type identities in these degenerate cases. This brings
us to the
following result, at least part of which is well
known in the physics literature (see e.g.\ \cite{HE}).
\begin{proposition}\label{std} Suppose that $X$ is a (conformal) Killing vector
  field on a Riemannian manifold, and $B$ is a locally conserved
  (metric trace-free) symmetric 2-tensor. Then the corresponding
  current $ J_a:=B_{ab}X^b $ is locally conserved, that is
\begin{equation}\label{conse}
\div J=0.
\end{equation}
\end{proposition}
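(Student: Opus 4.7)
The plan is to compute $\nabla^a J_a$ directly by the Leibniz rule and then dispatch each piece using the hypothesis on $X$ and/or $B$. Writing
\[
\nabla^a J_a = \nabla^a(B_{ab} X^b) = (\nabla^a B_{ab}) X^b + B_{ab}\nabla^a X^b,
\]
the first term vanishes immediately because $B$ is locally conserved, i.e.\ $\nabla^a B_{ab}=0$ by hypothesis. All of the work is therefore in the second term.

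To handle the second term I would symmetrise against $B$: since $B_{ab}=B_{(ab)}$, one has
\[
B_{ab}\nabla^a X^b = B_{ab}\nabla^{(a} X^{b)} = \tfrac{1}{2} B_{ab}(\cL_X g)^{ab}.
\]
Now split into the two cases of the hypothesis. If $X$ is a (genuine) Killing field, then $\cL_X g = 0$ and the term is zero with no restriction on the trace of $B$. If instead $X$ is merely conformal Killing, then $\cL_X g = \tfrac{2}{n}(\div X)\,g$, so
\[
B_{ab}\nabla^a X^b = \tfrac{1}{n}(\div X)\, g^{ab}B_{ab} = \tfrac{1}{n}(\div X)\, V,
\]
which vanishes precisely under the extra assumption that $B$ is trace-free ($V=0$). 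Combining the two cases yields $\nabla^a J_a = \div J = 0$, as claimed.

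There is essentially no obstacle here: the statement is a one-line consequence of the product rule once one notices the symmetrisation trick $B_{ab}\nabla^a X^b = \tfrac{1}{2}B_{ab}(\cL_X g)^{ab}$, which is exactly the same manipulation already used in equation \nn{key1} above. The only subtlety worth flagging is the logical structure of the ``(conformal)'' and ``(metric trace-free)'' qualifiers: the proposition really asserts two parallel statements, one in which $X$ is Killing and $B$ is arbitrary locally conserved symmetric, and one in which $X$ is conformal Killing and $B$ is additionally trace-free; both fall out of the same computation.
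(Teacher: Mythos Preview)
Your proof is correct and follows essentially the same approach as the paper's: both expand $\nabla^a J_a$ via the Leibniz rule, kill the first term by local conservation, rewrite the second as $\tfrac{1}{2}(B,\cL_X g)$ using the symmetry of $B$ (i.e.\ equation \nn{key1}), and then dispatch the two cases separately. Your reading of the parenthetical ``(conformal)'' / ``(metric trace-free)'' structure is also exactly what the paper intends.
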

\begin{proof}
  If $B$ is a symmetric 2-tensor that satisfies $\nabla^aB_{ab}=0$
  then for any vector field $X^b$, and setting $ J_a:=B_{ab}X^b $, it
  follows immediately from \nn{key1} that 
  $\nabla_a J^a$ is zero if and only if $\cL_X g$ is pointwise
  orthogonal to $B$. Thus in particular if $\cL_X g=0$ this holds.  It
  also holds if instead $\cL_X g=\frac{2}{n}(\div X) g$, provided $B$
  its trace-free.
\end{proof}
For either of the cases in the Proposition, it is easily seen that
the Pohozaev-Schoen type identity of Theorem \ref{maini} is equivalent to the
usual flux conservation law for conserved currents. 

On the other hand if $B$ is locally conserved but not necessarily
trace-free then from \nn{key1} we have, in our current notation, $\div
J= V \div X /n$, for a conformal Killing vector field $X$. Then on the
left-hand-side of \nn{prelim} $\int_{\partial
  M}B_{ab}X^a\nu^b~d\sigma_g= \int_{\partial M} J_a\nu^a ~d\sigma_g $ is
a measure of flux reflecting conservation failure.

Thus we see that the identity of Theorem \ref{maini} is exactly a
complement of the usual conservation law for conserved currents. To
underscore this point we note here that the Proposition above provides
a route to proliferating conserved quantities on geometries with
symmetry.
\begin{theorem} \label{cons} Each natural scalar invariant $L$
determines
  a corresponding natural gradient
\begin{equation}\label{gradi}
B^L_{ab},
\end{equation}
and so the following:
\begin{itemize}
\item On any Riemannian manifold with a Killing vector field $X$ one
  obtains a corresponding canonical and locally conserved current
  $J^L_a$, (i.e. $J^L$ satisfies \nn{conse}).

\item If $L$ has the property that, on closed manifolds, $\cS(g)=\int_M
  L dv_g$ is conformally invariant, then $B^L_{ab}$ is conformally
  covariant 
and trace-free. It follows that on any Riemannian manifold
  with a conformal Killing vector field $X$ the corresponding
  canonical and locally conserved current $J^L_a$ is conformally
  covariant.  In this case the local conservation equation \nn{conse}
  is conformally invariant.

\item In either case $L$ determines a non-local invariant 
$$
I^L_{\Sigma}:=\int_{\Sigma} J^L_a d\sigma^a,
$$
for each hypersurface $\Sigma$, with the property that
$I^L_{\Sigma_1}=I^L_{\Sigma_2}$ if $\Sigma_1$ and $\Sigma_2$ are
homologous hypersurfaces  sharing the same boundary. If $\cS(g)=\int_M L
dv_g$ is conformally invariant then $I^L_\Sigma$ is conformally
invariant.

\end{itemize}

\end{theorem}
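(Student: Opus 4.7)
My plan is to assemble the statement from the machinery already developed in the paper. Given $L$, I would first construct the canonical gradient by considering $\cS(g) := \int_M L\,dv_g$ on a compact piece of $M$; by Remark~\ref{natc} and the integration-by-parts argument of Section~\ref{total}, the Fr\'echet derivative takes the form $\cS'(g)(h) = \int_M (h, B^L(g))\,dv_g$, where $B^L_{ab}$ is a natural symmetric $2$-tensor given by a universal polynomial expression in the jets of $g$ and its curvature. By Remark~\ref{others} the Hilbert-Noether identity $\nabla^a B^L_{ab} = 0$ then holds on every Riemannian manifold, without any global hypothesis. For the first bullet, taking $X$ a Killing vector field and setting $J^L_a := B^L_{ab}X^b$, Proposition~\ref{std} yields $\nabla^a J^L_a = 0$ immediately, since $\cL_X g = 0$.

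For the second bullet, I would assume $\cS$ is conformally invariant on closed manifolds, so that $\cS^\bullet(g)(\w) = 0$ for every $\w \in C^\infty(M)$; Lemma~\ref{Vori} combined with \nn{confv} then forces $V = g^{ab}B^L_{ab} \equiv 0$, showing $B^L_{ab}$ is trace-free. Conformal covariance of $B^L_{ab}$ would follow by a naturality argument: since $B^L_{ab}$ is built universally from $g$ and its curvature, under $\hat g = e^{2\Upsilon}g$ it transforms into a multiple of $B^L_{ab}$ plus correction terms involving the jets of $\Upsilon$; infinitesimal conformal invariance of $\cS$, tested against compactly supported non-constant $\Upsilon$, forces these correction terms to cancel, leaving a pure conformal rescaling $B^L_{ab}(e^{2\Upsilon}g) = e^{w\Upsilon}B^L_{ab}(g)$ for a determined weight $w$. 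With $B^L_{ab}$ trace-free and locally conserved and $X$ conformal Killing, Proposition~\ref{std} again delivers $\nabla^a J^L_a = 0$; conformal covariance of $J^L_a$ is inherited from $B^L_{ab}$, and conformal invariance of the conservation equation \nn{conse} then follows by direct use of the transformation law of the divergence operator acting on $1$-forms of the induced weight.

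For the third bullet, I would define $I^L_\Sigma := \int_\Sigma J^L_a\, d\sigma^a$. If $\Sigma_1$ and $\Sigma_2$ are homologous with common boundary, one picks an oriented $n$-chain $\Omega$ with $\partial\Omega = \Sigma_1 - \Sigma_2$; Stokes' theorem then gives $I^L_{\Sigma_1} - I^L_{\Sigma_2} = \int_\Omega \nabla^a J^L_a\, dv_g = 0$. Conformal invariance of $I^L_\Sigma$ in the applicable case follows because the conformal weight of $J^L_a$ exactly cancels that of $\nu^a d\sigma_g$, so $J^L_a\, d\sigma^a$ is a conformally invariant $(n-1)$-form.

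The main technical obstacle is the derivation of conformal covariance of $B^L_{ab}$ from conformal invariance of $\cS$: the trace-vanishing drops out of Lemma~\ref{Vori}, but pinning down the scaling law requires a careful accounting of the non-constant jets of $\Upsilon$ in the transformation formulas for the curvature and its covariant derivatives, combined with naturality. Once this is established, every remaining step is a short application of results already in the paper.
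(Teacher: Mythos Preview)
Your proposal follows essentially the same route as the paper's proof: construct $B^L$ as the natural gradient of $\int_M L\,dv_g$ on closed manifolds, transport it everywhere via its universal formula (Remark~\ref{others}), invoke Proposition~\ref{std} for the conserved current in both the Killing and conformal-Killing cases, deduce trace-freeness from $\cS^\bullet=0$ (Lemma~\ref{Vori}), and apply Stokes for the hypersurface invariant. The weight bookkeeping for conformal invariance of $I^L_\Sigma$ is also the same as the paper's.

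The one place to correct is your argument for conformal covariance of $B^L_{ab}$. As phrased, ``infinitesimal conformal invariance of $\cS$, tested against compactly supported non-constant $\Upsilon$, forces these correction terms to cancel'' is not right: first-order conformal invariance $\cS^\bullet(g)(\omega)=0$ only yields $g^{ab}B^L_{ab}=0$ and says nothing directly about the $\Upsilon$-jet correction terms in $B^L(e^{2\Upsilon}g)$. What is actually needed is a \emph{second} variation, mixing a conformal direction with a general metric direction---this is exactly what the paper invokes (citing \cite{TomSrni}). Concretely: since $\cS(e^{2\Upsilon}g)=\cS(g)$ for every $g$, their total-metric gradients coincide; the chain rule relating the gradient at $e^{2\Upsilon}g$ to that at $g$ then forces $B^L_{ab}(e^{2\Upsilon}g)=e^{(2-n)\Upsilon}B^L_{ab}(g)$. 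So you were right to flag this step as the crux, but the resolution is to differentiate the gradient (go to second order), not to track jets of $\Upsilon$ through curvature transformation laws.
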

\begin{proof}
  We observed in section \ref{total} that on closed manifolds the
  action determined by $L$, viz.\ $S(g)=\int_M L ~dv_g$ has a
  corresponding natural gradient $B^L_{ab}$, and this is locally
  conserved, cf.\ \nn{div}. Then, as a natural tensor, $B^L_{ab}$ is
  given by a universal formula in terms of partial (metric or volume
  form) contractions of Levi-Civita covariant derivatives of the
  Riemannian curvature. We now take this universal formula as defining
  the symmetric and locally conserved tensor $B^L_{ab}$.

Thus the first result then follows from
  Proposition \ref{std} with
\begin{equation}\label{jl}
J^L_a:= B^L_{ab}X^b.
\end{equation}

Now set $V:=g^{ab}B_{ab}^L$.  If $\cS(g)$ is conformally invariant (on
closed manifolds) then \nn{core} must be zero when, for example
$g^t=e^{2t\om}g$, $\om\in C^\infty (M)$. But in this case $h=2\om g$,
so
$$
\int_M \om V ~dv_g=0.
$$ 
This must hold for arbitrary $\om\in C^\infty (M)$, and so $V=0$,
i.e.\ $B^L$ is metric trace-free. Again this must be also true of the
universal formula for $B^L$.

Thus the claim that $J^L$ (as in \nn{jl} with X now conformal Killing)
is conserved, as stated in the second point of the Theorem, also
follows from Proposition \ref{std}. An easy argument involving second
variations of $\cS(g)$, that mix conformal and total metric
variations, then shows that $B^L$ is necessarily conformally invariant
(see e.g.\ \cite{TomSrni} where also the notion of conformal
invariance, as used here, is discussed).

Finally for the second point, if $L$ has a well-defined weight (and
any $L$ is a sum of such) then $\cS(g)$ conformally invariant and
non-trivial implies this weight is $-n$. Since any natural scalar is a
sum of invariants each of which has a well-defined weights, it follows
that we may assume without loss of generality that $L$ has weight
$-n$. It follows that $B^L$, and hence also $J^L$, has weight
$2-n$ and in fact they are then conformally covariant of weight $2-n$. 
In this case it is well known (and easily verified) that the
equation \nn{conse} is conformally invariant.

The third point is then immediate from the divergence theorem, save
for the comment about conformal invariance. But the latter is an easy
consequence of the weight of $J^L$ and its conformal covariance.

\end{proof}

\begin{remark} If $L$ is a coupled scalar invariant, in the sense of
  Section \ref{se}, then we may replace $B^L$, In Theorem \ref{cons},
  by the corresponding generalised energy-momentum tensor.
\end{remark}

\subsection{Other signatures} For simplicity of exposition in the
above we have restricted to Riemannian signature. In fact all results
above in Section \ref{bc} extend as stated to pseudo-Riemannian
manifolds of any signature with the following restrictions and minor
adjustments: the boundary conormal $\nu_a$ is nowhere null; it is
normalised so that $g_{ab}\nu^a\nu^b=\pm 1$; and it satisfies that at
any point of the boundary $\nu_aX^a$ is positive if $X^a$ is an
outward pointing tangent vector.

The restriction that $\nu_a$ be nowhere null can be removed if
statements are adjusted appropriately. We
leave this to the reader.

\section{Examples and Applications} \label{exs}

Theorems \ref{ukw} and \ref{maini} are already very general. For
example begin with {\em any} natural scalar invariant $L$.
Since natural scalar invariants are easily written down using
Weyl's classical invariant theorem \cite{ABP}, we may readily
proliferate examples. Then
generically the total metric variation of $\cS(g):= \int_M L(g) dv_g$
will yield a corresponding non-trivial Euler-Lagrange tensor $B^L$ via
\nn{core}. Exceptions are those natural scalars $L$ whose integral is
a topological (or smooth structure) invariant, such as the Pfaffian in
even dimensions. In any case of $B^L\neq 0$ Theorem \ref{maini} is
 non-trivial.

 If the integral of $L$ is conformally invariant (so the weight of $L$
 is $-n$) then $B^L$ is trace-free (by Theorem \ref{cons}), and the
 left-hand-side of \nn{b} vanishes in Theorem \ref{maini}; the latter
 nevertheless yielding a non-trivial constraint as discussed in
 Section \ref{consS}.  Otherwise, by \nn{confv} we see that
 $V^L:=\tr^g(B^L)$ is a conformally variational natural scalar and
 Theorem \ref{ukw} and Theorem \ref{maini} apply non-trivially.

Suppose that $V^L$ has a well defined weight $\ell\neq -n$ (as follows
if $L$ does).  Then the map 
$$
L\to (n+\ell)^{-1} V^L
$$
may be regarded as a projection to the {\em conformally variational
  part} of $L$, as follows from Proposition \ref{self}. Ignoring
possible deeper applications, this at least shows that conformally
variational scalar invariants are, in a suitable sense, extremely
common. We discuss some cases below.

Note that if $V$ is a weight $\ell\neq -n$ scalar invariant, then it being
conformally variational immediately implies it has some properties which are
analogous to the scalar curvature. In particular we have the following. Let us
write $B^V_{ab}$ for the gradient of the functional $\cS^V(g):=
(n+\ell)^{-1}\int_M V dv_g$. Then $V=g^{ab}B^V_{ab}$ and, denoting by
$\BO^V$ the trace-free part of $B^V$, we have this observation:
\begin{proposition}\label{ccurv}
  If $\BO^V_{ab}=0$ then $V=$~constant.
\end{proposition}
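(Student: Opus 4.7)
The plan is to combine two facts already established in the paper: that $B^V_{ab}$ is locally conserved, and that its trace is $V$. Concretely, since $V$ is conformally variational of weight $\ell \neq -n$, Proposition \ref{self} (via Corollary \ref{aa}) tells us that $V$ is naturally conformally variational, arising from the Riemannian functional $\cS^V(g) = (n+\ell)^{-1}\int_M V\,dv_g$ whose gradient is the natural tensor $B^V_{ab}$, with $V = g^{ab} B^V_{ab}$ by Lemma \ref{Vori}.

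Next I would invoke the Hilbert-Noether identity \nn{div} from Section \ref{total}: gradients of Riemannian functionals are locally conserved, so $\nabla^a B^V_{ab} = 0$. Decomposing $B^V_{ab} = \BO^V_{ab} + \tfrac{1}{n} V g_{ab}$ and feeding this into the conservation law gives
\begin{equation*}
\nabla^a \BO^V_{ab} + \tfrac{1}{n}\nabla_b V = 0.
\end{equation*}
Under the hypothesis $\BO^V_{ab} = 0$ the first term vanishes, leaving $\nabla_b V = 0$, i.e.\ $V$ is constant (we are on a connected manifold).

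So the proof is essentially a one-line consequence of the locally conserved nature of $B^V$, and there is no real obstacle; the point of the Proposition is that the trace-free part $\BO^V_{ab}$ plays the role, for a general conformally variational $V$, that the trace-free Ricci tensor plays for $V = \Sc$ (where $\BO^V = 0$ recovers the fact that an Einstein metric has constant scalar curvature). The only thing one might flag is the appeal to Corollary \ref{aa} to ensure the gradient $B^V$ actually exists under the stated hypotheses, but this has already been secured in the excerpt.
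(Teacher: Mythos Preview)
Your proof is correct and follows exactly the paper's approach: the paper's proof simply states that the result is ``an immediate consequence of $\nabla^a B^V_{ab}=0$,'' and you have spelled out precisely how that consequence goes via the trace/trace-free decomposition.
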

\begin{proof}
This is an immediate consequence of
$\nabla^a B^V_{ab}=0 $.
\end{proof}
\noindent The point is that in the case of $V$ being the scalar
curvature $\BO^V_{ab}=0$ expresses the Einstein equations. In that setting
the result in the Proposition is often viewed as a consequence of the
Bianchi identities, but we see here that it can be seen to arise from
the fact that the Einstein tensor is locally conserved (and so the
proposition may be extended in an obvious way).

The discussion here is still unnecessarily restrictive. Further
examples arise from more general Riemannian functionals (e.g.\ Example
\ref{poly}), the use of generalised energy-momentum tensors and so forth.
We conclude this section with some special cases.

\subsection{Local conformal invariants} If $V(g)$ is a natural
(scalar) conformal invariant of weight $\ell$, meaning that
$V(e^{2\om}g)=e^{\ell \om} V(g)$ then if $\ell\neq -n$ it is easily
verified that $V(g)$ is naturally conformally variational, with
\nn{selfe} giving a functional. For example if $W$ denotes the
Weyl curvature then $|W|^2$ is a weight $-4$ conformal invariant, and
so is conformally variational in dimensions greater than 4.

\subsection{Q-curvatures} On Riemannian $n$-manifolds, there is an
important class of natural scalar curvature quantities $Q_m$,
parametrised by positive even integers $m$ with
$m\notin\{n,n+2,n+4,\ldots\}$, which are sometimes termed {\em
  subcritical Q-curvatures} \cite{Tomsharp}. 
In a conformal sense these generalise the scalar
curvature: $Q_2$ is the scalar curvature ($n\geq 3$) and if
$\widehat{g}=e^{2\om}g$, $\om\in C^\infty(M)$, then
\begin{equation}\label{Qt}
 Q^{\widehat{g}}_m = u^{\frac{n+m}{m-n}}\left(\delta
  S^g_md + Q^g_m\right)u,
\end{equation}
where, $u=e^{\frac{n-m}{2}\om}$, $\delta=-\div$ is the formal adjoint
of the exterior derivative $d$ and $S^g_m$ is an appropriate operator.
The differential operator $P_m: = \delta S^g_md + Q^g_m $ is
conformally invariant, and is ($\frac{2}{n-m}\times)$ the GJMS
operator \cite{GJMS} with leading term the Laplacian power
$\Delta^{m/2}$. Thus \nn{Qt} is a higher order analogue of the Yamabe
equation (which controls scalar curvature prescription). Considering
now a curve $\widehat{g}=e^{2 s \om}g$, and differentiating at $s=0$, we find
$$
(Q_m)^\bullet= -m Q_m \om + \frac{n-m}{2} \delta S^g_md \om.
$$
It follows easily that $Q_m$ is naturally conformally variational and
arises from an action as given in Proposition \ref{self} (with
$\ell=-m$). Thus on closed manifolds the $Q_m$ are constrained by
\nn{confN} of Theorem \ref{ukw}.

\medskip

The critical Q-curvature $Q_n$ is a weight $-n$ Riemannian invariant
on even $n$-manifolds, and is conformally variational
\cite{Tomsharp,BrO91}, although not known to be naturally so. Thus it
satisfies the Kazdan-Warner type identity of Theorem \ref{bway} (and
cf.\ \cite{delRob} who first prove this and also the subcritical
cases). In dimension 2 the critical $Q$ curvature is the Gauss
curvature and so is also covered by Theorem \ref{nocvt}. It seems
likely that the higher dimensional critical Q-curvatures could
also be treated this way, but we shall not take that up here.  An easy
proof using conformal diffeomorphism invariance also follows from
Theorem 7.1 of \cite{BrGoPont}.

In summary: There are Q-curvatures $Q_m$ for even integers
$m\notin\{n+2,n+4,\ldots\}$ and the following holds.
\begin{proposition}\label{Qcase}
For any conformal vector field $X$ on a closed
  $(M^n,g)$, $n\geq 2$, we have
$
\int_M \cL_X Q_{m} ~ dv_g=0.
$ 
\end{proposition}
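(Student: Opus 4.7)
The strategy is to partition the range of $m$ into the subcritical case ($m\neq n$) and the critical case ($m=n$, which requires $n$ even), and apply the general machinery already developed in Sections \ref{HN} and \ref{bc} to each.

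For the subcritical case I would first read off the conformal variation formula for $Q_m$ from the preceding display, namely
\begin{equation*}
(Q_m)^\bullet(\om) = -m Q_m \om + \tfrac{n-m}{2}\,\delta S_m^g d\om.
\end{equation*}
Integrating against $\frac{1}{2}$ and noting that $\delta S_m^g d$ is formally self-adjoint (and annihilates constants after integration by parts), the linearisation of $\om\mapsto Q_m(e^{2\om}g)$ is formally self-adjoint, so $Q_m$ is conformally variational. Since $Q_m$ has well-defined weight $\ell = -m$, and the subcritical hypothesis $m\neq n$ forces $\ell\neq -n$, Proposition \ref{self} applies and $\cS(g) = (n-m)^{-1}\int_M Q_m\, dv_g$ is a Riemannian functional whose conformal variation recovers $Q_m$. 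By Corollary \ref{aa}, $Q_m$ is naturally conformally variational, and Theorem \ref{ukw} immediately yields $\int_M \cL_X Q_m\, dv_g = 0$.

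For the critical case $m=n$ the weight obstruction $\ell = -n$ is exactly where Proposition \ref{self} fails, so I cannot route through Theorem \ref{ukw}. In dimension $n=2$, $Q_2$ is the Gauss curvature, and Example \ref{poly} exhibits it as arising via \eqref{Fbul} from the Polyakov functional $\cS(g) = \det\Delta_g / A(g)$, which is a Riemannian functional. Theorem \ref{nocvt} then gives the identity. For higher even $n$, I would invoke Theorem \ref{bway} directly: the established fact that the critical $Q_n$-curvature is conformally variational (as cited from \cite{Tomsharp,BrO91}) together with the naturality of $Q_n$ places us squarely in the hypotheses of Theorem \ref{bway}, which concludes $\int_M (\div X)Q_n\, dv_g = 0$, equivalent to the desired identity after integration by parts.

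The main subtle point, and the reason the critical case cannot be absorbed into the subcritical argument, is precisely the phenomenon flagged in Remark \ref{215}: when weight matches against the volume form, integration by parts in the Hilbert--Noether argument no longer produces a surface term that vanishes for conformal vector fields, and one must pass through the closedness of the 1-form $\om\mapsto \int_M \om V\,dv_g$ on the conformal class together with the Lelong--Ferrand--Obata dichotomy. Once these two cases are assembled, the proposition follows.
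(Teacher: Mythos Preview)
Your proposal is correct and follows essentially the same approach as the paper: the subcritical case is handled by showing $Q_m$ is naturally conformally variational (via Proposition \ref{self} with $\ell=-m$) and invoking Theorem \ref{ukw}, while the critical case $m=n$ appeals to Theorem \ref{bway} (with the $n=2$ Gauss-curvature case alternatively covered by Example \ref{poly} and Theorem \ref{nocvt}). The only minor difference is that you route the subcritical case explicitly through formal self-adjointness and Corollary \ref{aa}, whereas the paper verifies directly that the functional of Proposition \ref{self} has the correct conformal gradient; these are equivalent observations.
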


In dimensions $n\geq 2$, $Q_2$ is a non-zero multiple of the scalar
curvature. Explicit formulae for the Q-curvatures $Q_4$, $Q_6$, and
$Q_8$, as well as an algorithm for generating the higher $Q_m$, may be
found in \cite{GoPet}. An alternative algorithm may be found in \cite{GrH}.
A recursive approach for the Q-curvature is developed in \cite{JQ}.

\subsection{Higher Einstein tensors}\label{hein}

Throughout the following we work on a manifold of dimension $n\geq 3$
and take $m\in 2\mathbb{Z}_{>0}$, with $m\notin\{n+2,n+4,\ldots\}$.  With
$Q_m$, as above and $\dim M=n\geq 3$, we define a class of natural
tensors.
\begin{definition}\label{heind}
  Let $E^{(m)}$ be the symmetric natural 2-tensor defined by \nn{core}
  (i.e.\ $E^{(m)}:=B$) where
$$
\cS(g):= (n-m)^{-1}\int_M Q_m^g dv_g, \quad \mbox{if}\quad m\neq n ,
$$
and~ $\cS(g):= \int_M Q_m^g dv_g$, if $m=n$.  Then we shall
call $E^{(m)}$ a \underline{higher Einstein tensor}.
\end{definition}
The term ``higher Einstein'' is partly suggested by \nn{Qt} and the following:\\
\begin{itemize}  
\item For $m=2$, and $n\geq 3$, $E^{(m)}$ is the usual Einstein tensor
  (up to a non-zero constant).

\item Since each $E^{(m)}$ arises as a total metric variation we have 
$$
\nabla^a E^{(m)}_{ab}=0,
$$
as a special case of \nn{div}.

\item Proposition \ref{ccurv} holds with $V= Q_m$ and $B^V=E^{(m)}$, 
 with $m\neq n$.

\item $Q_m =g^{ab} E^{(m)}_{ab} $, for  $m\neq n$,
and on Einstein manifolds $Q_m$ is
  constant \cite{FGambnew,Gopowers}.

\end{itemize}

\begin{remark} In the case of even manifolds $M^n$ and $m=n$,
  $E^{(m)}$ is the Fefferman-Graham obstruction tensor of
  \cite{FGast}, see \cite{GrH}.  (In dimension $n=4$ this is the
  well-known Bach tensor.) Thus in this case $E^{(m)}$ is trace-free
  and conformally invariant.
\end{remark}

The following is a special case of Theorem \ref{maini}. 
\begin{proposition}\label{pSE} Let $X$ be a conformal vector field 
on a compact manifold $M$ with boundary $\partial M$. Then 
\begin{equation}\label{Qsch}
\int_N \cL_X Q_m ~ dv_g= - n\int_{\partial N} \EO^{(m)}_{ab} X^a \nu^b d\sigma_g,
\end{equation}
for $m\neq n$, and where $ \EO^{(m)}_{ab}$ is the trace-free part of
$E^{(m)}_{ab}$.
\end{proposition}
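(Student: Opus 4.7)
The plan is to recognise that Proposition \ref{pSE} is, for $m\neq n$, almost tautologically a specialisation of Theorem \ref{maini} to the 2-tensor $B = E^{(m)}$. So the strategy is simply to verify that $E^{(m)}$ satisfies the three hypotheses of Theorem \ref{maini}: it is a symmetric 2-tensor, it is locally conserved, and its metric trace is $Q_m$.

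First, symmetry is built into Definition \ref{heind}, since $B$ in \nn{core} is a section of $S^2 M$. Compact support is automatic because $M$ is compact. Local conservation, $\nabla^a E^{(m)}_{ab}=0$, is explicitly recorded in the second bullet after Definition \ref{heind} as an instance of the Hilbert--Noether identity \nn{div}; one may recall that this in turn is established in Section \ref{total} by feeding $h = \cL_X g$ into the total metric variation \nn{core} and using diffeomorphism invariance of $\cS(g) = (n-m)^{-1}\int_M Q_m^g\, dv_g$.

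Next I would identify the trace. Since $Q_m$ is naturally conformally variational with weight $\ell=-m\neq -n$, Proposition \ref{self} guarantees that
$$
\cS(g) := (n-m)^{-1}\int_M Q_m^g\, dv_g
$$
realises $Q_m$ via \nn{Fbul}, i.e.\ $\cS^\bullet(g)(\om) = 2\int_M \om\, Q_m\, dv_g$. By Lemma \ref{Vori}, whenever a Riemannian functional has a gradient $B$ at $g$, its conformal variation is governed by $V = g^{ab}B_{ab}$. Applied to $\cS$ and $B = E^{(m)}$, this yields $Q_m = g^{ab}E^{(m)}_{ab}$, which is precisely the trace identity recorded in the fourth bullet.

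With all hypotheses of Theorem \ref{maini} verified, substituting $B = E^{(m)}$ and $V = Q_m$ into \nn{b} immediately gives
$$
\int_M \cL_X Q_m\, dv_g = -n \int_{\partial M} \EO^{(m)}_{ab} X^a \nu^b\, d\sigma_g,
$$
which is the stated identity. There is no serious obstacle: the only matter requiring any care is the exclusion $m \neq n$, which is needed precisely so that $\ell + n \neq 0$ and Proposition \ref{self} applies to produce the functional $\cS$ whose gradient is $E^{(m)}$ and whose trace identity delivers $Q_m$; in the critical case $m=n$ the Q-curvature has weight $-n$ and $E^{(n)}$ (the obstruction/Bach tensor) is trace-free, so the left-hand side of \nn{b} vanishes trivially and the content of the identity reduces to the conservation law phenomenon of Section \ref{consS} rather than Proposition \ref{pSE}.
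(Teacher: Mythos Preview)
Your proposal is correct and is essentially the paper's own approach: the paper simply states that Proposition \ref{pSE} ``is a special case of Theorem \ref{maini}'', and you have spelled out precisely why the hypotheses of that theorem are met by $B=E^{(m)}$, citing the bullets after Definition \ref{heind} and Lemma \ref{Vori}/Proposition \ref{self} for the trace identity $Q_m=g^{ab}E^{(m)}_{ab}$. The additional remark on the excluded case $m=n$ is accurate and matches the paper's discussion.
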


\noindent Thus on even manifolds we may view the $E^{(m)}$ as
``interpolating'' between the usual Einstein tensor and the
Fefferman-Graham obstruction tensor. The latter vanishes on Einstein
manifolds \cite{FGast,GoPetOb,GrH}.
These observations suggest an interesting problem:
\begin{quote}
\noindent{\bf Question:} Do the $\EO^{(m)}_{ab}$ vanish on Einstein manifolds?
\end{quote}
Since we posed this it has been observed by Graham that there a simple
argument confirming that the answer is yes. So the higher Einstein
tensors provide a strict weakening of the Einstein condition.

\begin{theorem}\label{HET} If $(M,g)$ Einstein then $\EO^{(m)}_{ab}=0$
for all  $m\in 2\mathbb{Z}_{>0}$, with $m\notin\{n+2,n+4,\ldots\}$.
\end{theorem}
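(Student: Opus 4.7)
The plan is to reduce the claim to a criticality statement for the functional $\cS(g) := (n-m)^{-1}\int_M Q_m^g\,dv_g$ (resp.\ $\int_M Q_m\,dv_g$ when $m=n$) at Einstein metrics. Since $\tr_g E^{(m)} = Q_m$, and $Q_m$ is constant on an Einstein metric $g$ (see the bullet list following Definition \ref{heind}), the conclusion $\EO^{(m)}_{ab}=0$ is equivalent to $E^{(m)}_{ab}=\tfrac{1}{n}Q_m\,g_{ab}$. By a Lagrange-multiplier argument this is in turn equivalent to $g$ being a critical point of $\cS$ among all variations $h\in\Gamma(S^2M)$ subject to the single constraint $\int_M g^{ab}h_{ab}\,dv_g = 0$; indeed, vanishing of $\cS'(g)$ on this codimension-one subspace forces $E^{(m)}$ to lie in its $L^2$-orthogonal complement, which is spanned by $g$.

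First I would use that $Q_m^g$ is constant on $g$ to simplify
$$\cS'(g)(h) = \frac{1}{n-m}\int_M\Big[(Q_m)^\bullet(h) + \tfrac{1}{2}Q_m\,g^{ab}h_{ab}\Big]\,dv_g,$$
whose second term vanishes for volume-preserving $h$. Splitting $h=2\omega g + h_T$ with $\omega = \tfrac{1}{2n}g^{ab}h_{ab}$ and $h_T$ pointwise trace-free, volume-preservation becomes $\int_M \omega\,dv_g=0$. For the conformal piece, the known transformation identity $(Q_m)^\bullet(2\omega g) = -mQ_m\omega + \tfrac{n-m}{2}\,\delta S_m^g d\omega$ gives $\int_M (Q_m)^\bullet(2\omega g)\,dv_g = -mQ_m\int_M \omega\,dv_g + \tfrac{n-m}{2}\int_M \delta(S_m^g d\omega)\,dv_g = 0$: the first term vanishes by volume-preservation, and the second is the integral of a pure divergence.

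The hard part will be to show that the trace-free contribution also vanishes, i.e.\ that $\int_M (Q_m)^\bullet(h_T)\,dv_g = 0$ at an Einstein metric for every pointwise trace-free $h_T$. My approach here would be to invoke the Fefferman-Graham ambient / Poincar\'e-Einstein realisation of $Q_m$: over an Einstein base the ambient metric admits a rigid explicit form, and a trace-free deformation $h_T$ of $g$ lifts to an ambient perturbation whose induced variation of $Q_m$ assembles, after integration over $M$, into a total covariant divergence which integrates away. Equivalently, one may exploit the factorisation $P_m = \prod_{k=1}^{m/2}(\Delta^g - c_k)$ of the GJMS operator on an Einstein background to express $(Q_m)^\bullet(h_T)$ as a divergence modulo terms whose integrals are controlled by the constancy of $Q_m$ and the Einstein condition $\Ric = \tfrac{\Sc}{n}g$. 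The case $m=n$ reduces, along the same lines, to the classical vanishing of the Fefferman-Graham obstruction tensor on Einstein metrics (as noted in the remark after Definition \ref{heind}), and it is at precisely this step that Graham's ``simple argument'' alluded to above the theorem provides the decisive input.
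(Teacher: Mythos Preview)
Your variational reduction is sound: on a closed manifold the vanishing of $\EO^{(m)}$ is equivalent to $g$ being critical for $\cS$ among volume-preserving variations, and your treatment of the conformal piece $h=2\omega g$ is correct. The problem is the trace-free piece. What you have written there is not an argument but a hope: you assert that the ambient lift of $h_T$ makes $(Q_m)^\bullet(h_T)$ integrate to a divergence, or alternatively that the GJMS product formula on Einstein backgrounds does the job, but neither claim is substantiated. The factorisation $P_m=\prod(\Delta-c_k)$ describes the operator \emph{at} the Einstein metric; it says nothing directly about $(Q_m)^\bullet$ in trace-free directions, which involves the linearisation of the whole ambient construction. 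Your final sentence effectively concedes this by deferring the decisive step to ``Graham's simple argument'' --- but that argument \emph{is} the content of the theorem, so invoking it here is circular.

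The paper's proof (due to Graham) bypasses the variational machinery entirely and is both shorter and more robust. The key structural input is that $Q_m$ admits a universal formula whose every term is a complete metric contraction of covariant derivatives of the \emph{Ricci} tensor alone --- no undifferentiated Weyl curvature appears (this is Proposition~3.5 of Fefferman--Graham's ambient-metric paper). Since the linearisations of both $\Ric$ and the Levi-Civita connection are expressible purely through covariant derivatives of $h$, integrating by parts shows that the gradient $E^{(m)}_{ab}$ itself has a universal formula built from partial contractions of covariant derivatives of $\Ric$. On an Einstein manifold $\Ric=\lambda g$ with $\lambda$ constant, so every covariant derivative of $\Ric$ vanishes and the surviving terms are contractions of copies of $g$; hence $E^{(m)}_{ab}$ is a constant multiple of $g_{ab}$ and $\EO^{(m)}=0$. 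This argument is local, needs no integration, and yields the $m=n$ case on the same footing as the others, whereas your route would have to treat that case separately.
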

\begin{proof} \cite{Grpriv} On any Riemannian manifold, the
  Q-curvatures may be given by formulae, the terms of which are simply
  complete metric contractions of covariant derivatives of the Ricci
  curvature, see Proposition 3.5 of \cite{FGambnew}, and the subsequent
  discussion there.  On the other hand in (3.20) of the same source it
  is observed that a metric variation $h=d g^t/dt|_{t=0}$ induces a
  variation of the Ricci curvature which may be expressed purely in
  terms of covariant derivatives of $h$. Specifically:
$$
\frac{d}{dt}\Big|_{t=0} \Ric_{ij}(g^t)= \frac{1}{2}(\nabla^k \nabla_j
h_{ik} + \nabla^k \nabla_i h_{jk} - \nabla^k \nabla_k h_{ij} -
\nabla_i \nabla_j h^k{}_{k}).
$$
The induced variation of the Levi-Civita takes a similar form
$$
\frac{1}{2}g^{k\ell}(\nabla_j h_{i\ell}+\nabla_i h_{j\ell}-\nabla_\ell h_{ij}).
$$

Putting these things together it follows easily that, on any
Riemannian manifold, there is a formula for the $E^{(m)}_{ab}$ which
is a linear combination of terms, each of which is a partial metric
contraction of covariant derivatives of the Ricci curvature. Again no
other curvature is involved in the formula. It follows easily that on an 
Einstein manifold $E^{(m)}_{ab}$ is simply a constant multiple of the metric.
\end{proof}

\begin{remark} Note that the constancy of the Q-curvatures on Einstein
  manifolds (mentioned earlier) is seen to be consistent with Theorem
  \ref{HET}, by dint of Proposition \ref{ccurv}, at least for $m\neq
  n$. (Of course to establish the result that the Q-curvatures are
  constant in this setting, including now the critical case, one would
  more easily use the first line of the proof of Theorem
  \ref{HET}. The fact that one can argue this way was pointed out for
  the GJMS operators in the paragraph after Proposition 7.9 of
  \cite{FGambnew}.)

  There is an analogue of Theorem \ref{HET}, and its proof, for the
  gradient (as in \nn{core}) of any natural scalar field arising as
  the restriction of a natural scalar on the Fefferman-Graham ambient
  manifold. In particular this applies to the $\BO^{(k)}_{ab}(g)$
  arising from the renormalised volume coefficients, as discussed in
  Proposition \ref{vkKW} below. These also vanish on Einstein
  manifolds.
\end{remark}
\begin{remark}
  In \cite{Gursky} Gursky makes several interesting remarks concerning
  the gradients $E^{(m)}_{ab}$. These are related to some of the ideas
  of Section \ref{subsch}. Surprisingly he is also able to define an
  analogue of $\EO^{(4)}_{ab}$ for conformally flat 4-manifolds, and
  (in this setting) this yields an identity of the form \nn{Qsch} for
  the critical Q-curvature. It would be interesting to investigate
  whether his tensor can be derived from a symmetry principle.
\end{remark}

\subsection{Renormalised volume coefficients} \label{rvc} Beginning
with a manifold $(M^n,g)$ $n\geq 3$, these natural scalar invariants
$v_{k}$ arise (see e.g. \cite{Grsrni}) in the problem of \cite{FGast}
of finding a 1-parameter $h_r$ of metrics, with $h_0=g$ and so that
$$
g_+:=\frac{dr^2+h_r}{r^2}
$$
is an asymptotic solution to $\Ric^{g_+}=-n g_+$ along $r=0$ in
$M_+:=M\times (0,\epsilon)$. The renormalised volume coefficients
$v_k$ are defined by a volume form expansion
$$
\left( \frac{\det g_\rho}{ \det g_0} \right) \sim 1 +
\sum_{k=1}^\infty v_k\rho^k ,
$$
in the new variable $\rho=-\frac{1}{2}r^2$ with $g_\rho:= h_r$.  In
odd dimension $n$ this determines $v_k$ for $k\in \mathbb{Z}_{\geq
  1}$, but in even dimensions the mentioned formal problem is
obstructed at finite order and so the $v_k$ are in general defined for
$k\in \{1,\cdots ,\frac{n}{2}\}$ (but are defined for $k\in
\mathbb{Z}_{\geq 1}$ in certain special cases, for example if $g$ is
Einstein or locally conformally flat).

Chang and Fang considered the $v_k(g)$ for the Yamabe type problem of
conformally prescribing constant $v_k(g)$ \cite{CF}. They showed that
for $n\neq 2k$ the equation $v_k(g)=$constant is the Euler-Lagrange
equation for the functional $\int_M v_k(g) dv_g$, under conformal
variations satisfying the volume constraint $\int_M dv_g=1$. 
This also follows from \cite[Theorem 1.5]{Gr} where Graham has shown that 
for $k\in \mathbb{Z}$, with $2k\leq n$ if $n$ even,
the infinitesimal conformal variation of the $v_k$ takes the form
\begin{equation}\label{gv}
\frac{d}{dt} v_k(e^{2t\om})|_{t=0} = -2k \om v_k
+\nabla_a(L^{ab}_{(k)}\nabla_b \om),
\end{equation}
with $L^{ab}_{(k)}$  a symmetric tensor (in fact more detail is
given in \cite{Gr}). 
It follows that, for $n\neq 2k$, the $v_{k}$ are naturally conformally
variational. Thus from Theorems \ref{ukw}, \ref{maini} and Corollary
\ref{ci} we have immediately the following.
\begin{proposition}\label{vkKW}
  Let $k\in \mathbb{Z}$, with $2k<n$ if $n$ even. The $v_k$ satisfy
  Theorem \ref{ukw}. Moreover if we write $B^{(k)}_{ab}(g)$ for the
  gradient determined by \nn{core} with $\cS(g):=(n-2k)^{-1}\int_M
  v_k(g)~ dv_g$ then on any compact manifold $N$, of dimension $n\neq
  2k$, with boundary $\partial N$, and with $X$ a conformal vector
  field, we have
$$
\int_N \cL_X v_k ~ dv_g= - n\int_{\partial N} \BO^{(k)}_{ab} X^a \nu^b d\sigma_g,
$$ 
where $ \BO^{(k)}_{ab}$ is the trace-free part of $B^{(k)}_{ab}$.
\end{proposition}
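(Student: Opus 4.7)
The proof will be essentially a direct application of the machinery already developed in Sections \ref{HN} and \ref{bc}, with Graham's conformal variation formula \nn{gv} providing the only non-trivial technical input. The overall plan is to verify that $v_k$ fits into the framework of naturally conformally variational invariants, extract the gradient $B^{(k)}_{ab}$, and then invoke Theorem \ref{ukw} and Theorem \ref{maini}.

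First, I would establish that $v_k$ is naturally conformally variational in the sense of Definition \ref{cv}. Using Graham's formula \nn{gv} together with the standard measure variation $\frac{d}{dt}\big|_{t=0} dv_{e^{2t\om}g} = n\om\, dv_g$, one computes for $\cS(g) := (n-2k)^{-1}\int_M v_k(g)\, dv_g$:
$$
\cS^\bullet(g)(\om) = (n-2k)^{-1}\int_M \bigl[-2k\om v_k + \nabla_a(L^{ab}_{(k)}\nabla_b\om) + n\om v_k\bigr] dv_g.
$$
On closed $M$ the divergence term integrates to zero, leaving the right-hand side proportional to $\int_M \om v_k\, dv_g$, which has the form \nn{Fbul}. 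Since the Lagrangian $v_k/(n-2k)$ is natural, the general discussion of Section \ref{total} guarantees a natural gradient $B^{(k)}_{ab}$ satisfying \nn{core} for arbitrary symmetric 2-tensor variations, and the Hilbert-Noether identity yields $\nabla^a B^{(k)}_{ab} = 0$. By Lemma \ref{Vori}, $g^{ab}B^{(k)}_{ab}$ recovers $v_k$ (up to the normalising constant fixed by the choice of $\cS$).

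Second, the first assertion of the proposition---that $v_k$ satisfies Theorem \ref{ukw}---then follows immediately: applying Theorem \ref{ukw} to the naturally conformally variational invariant $v_k$ gives $\int_M \cL_X v_k\, dv_g = 0$ on closed $(M,g)$ for any conformal vector field $X$. For the boundary identity, the locally conserved symmetric 2-tensor $B^{(k)}_{ab}$ together with its metric trace $v_k$ are exactly the hypotheses of Theorem \ref{maini}; substituting into \nn{b} produces
$$
\int_N \cL_X v_k\, dv_g = -n\int_{\partial N} \BO^{(k)}_{ab} X^a \nu^b d\sigma_g,
$$
as claimed. Equivalently, one can invoke Corollary \ref{ci} directly with $V = v_k$ and weight $\ell = -2k$, $\ell \neq -n$.

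There is no serious obstacle---the argument is essentially a verification that $v_k$ plugs into the framework. The only point that requires care is normalisation: one must confirm that the prefactor $(n-2k)^{-1}$ in the definition of $\cS$ is precisely what makes $g^{ab}B^{(k)}_{ab}$ equal $v_k$ (rather than a scalar multiple thereof), which is a direct consequence of the Graham formula computation above combined with Proposition \ref{self} for $\ell = -2k$.
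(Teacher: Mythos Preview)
Your proposal is correct and follows essentially the same approach as the paper: the paper's argument is simply the observation (immediately preceding the Proposition) that Graham's formula \nn{gv} shows the $v_k$ are naturally conformally variational for $n\neq 2k$, whence Theorems \ref{ukw}, \ref{maini} and Corollary \ref{ci} apply directly. Your write-up just makes these steps explicit, and your flagging of the normalisation constant as the one point requiring care is appropriate.
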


For the $v_k$, with $2k<n$ if $n$ even, this result specialises to
Kazdan-Warner type identities via Corollary \ref{kwa}.
Note that the differential operator on the right-hand-side of \nn{gv}
is formally self-adjoint, so from \cite[Lemma 2(ii)]{BrGovar} (see
Remark \ref{215}) this shows that the $v_k$ are conformally
variational, {\em including} $v_{n/2}$ for $n$ even. Thus from Theorem
\ref{bway}, or equivalently \cite[Theorem 2.1]{delRob}, we extend the
above result as follows.
\begin{proposition}\label{vnKW}
  Let $n=2k$, then for any conformal vector field $X$ on a closed
  $(M^n,g)$ we have
$$
\int_M \cL_X v_{k}~ dv_g=0.
$$ 
\end{proposition}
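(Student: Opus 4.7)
The plan is to reduce to Theorem \ref{bway}, whose whole purpose is precisely to handle critical-weight cases like $v_{n/2}$ that escape the more direct arguments of Theorem \ref{ukw}. The only hypothesis of Theorem \ref{bway} that requires checking is that $v_k$ is conformally variational as a natural scalar invariant; once this is established, Theorem \ref{bway} applies verbatim and yields $\int_M (\div X) v_k\, dv_g = 0$, which integration by parts rewrites as $\int_M \cL_X v_k\, dv_g = 0$.

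First I would invoke Graham's formula \nn{gv}, which holds in particular for $k=n/2$. The infinitesimal conformal variation of $v_k$ reads
$$
\frac{d}{dt}\Big|_{t=0} v_k(e^{2t\om}g) \;=\; -2k\, \om\, v_k + \nabla_a\bigl(L_{(k)}^{ab}\nabla_b\om\bigr).
$$
Viewed as a linear operator in $\om$, the first term is multiplication by $-2k v_k$ (manifestly formally self-adjoint), and the second is $\delta(L_{(k)} d\,\cdot)$ with $L_{(k)}^{ab}$ symmetric, which is a standard formally self-adjoint second-order operator. Hence the linearisation of $\om \mapsto v_k(e^{2\om}g)$ is formally self-adjoint on $C^\infty(M)$.

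Next I would appeal to \cite[Lemma 2(ii)]{BrGovar}, as flagged in Remark \ref{215}: for a natural scalar invariant, formal self-adjointness of this linearisation is equivalent to being conformally variational. Thus $v_{n/2}$ is a conformally variational natural scalar invariant, even though Proposition \ref{self} is unavailable due to the weight matching $\ell = -n$.

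With this in hand, Theorem \ref{bway} applies directly with $V = v_{n/2}$, and the Lelong-Ferrand-Obata argument invoked in its proof takes care of the distinction between the standard sphere (where $V$ is constant on the round metric) and other closed conformal manifolds (where $X$ must be Killing, so $\cL_X V = \div(VX)$ integrates to zero). The only delicate point is the self-adjointness verification in the critical case: once Graham's explicit formula \nn{gv} is accepted, the symmetry of $L_{(k)}^{ab}$ trivialises it, and no further obstacle remains.
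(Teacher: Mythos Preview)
Your proposal is correct and follows essentially the same route as the paper: use Graham's variation formula \nn{gv} to see that the conformal linearisation of $v_k$ is formally self-adjoint (multiplication by $-2kv_k$ plus the symmetric operator $\nabla_a(L_{(k)}^{ab}\nabla_b\,\cdot\,)$), invoke \cite[Lemma~2(ii)]{BrGovar} to conclude that $v_{n/2}$ is conformally variational, and then apply Theorem~\ref{bway}. This is exactly the argument the paper indicates in the paragraph immediately preceding the proposition.
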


\begin{remark}
  The Kazdan-Warner type identities for the $v_k$ are first due to
  \cite{GuoHL}. They use \nn{gv} and a specific calculation that
  follows the ideas of \cite{BoE}. Our point is that, since \nn{gv}
  shows that the $v_k$ are conformally variational, the results can
  also be deduced immediately from the general principles. Importantly
  using also the stronger fact that for $k\neq 2n$ the $v_{k}$ are
  naturally conformally variational we also obtain the generalised
  Schoen-type identity of Proposition \ref{vkKW}.

  For $k=1,2$, or when $g$ is locally conformally flat, the $v_k$
  agree with the elementary symmetric functions $\sigma_k(g^{-1}P)$ of
  the Schouten tensor $P$, see \cite{CF,Gr}. So as noted in
  \cite{GuoHL} the Kazdan-Warner type identities for $v_k$ include
  also the similar results of Viaclovsky for the $\sigma_k(g^{-1}P)$,
  \cite{V2}.
\end{remark}

\subsection{Gauss-Bonnet invariants and Einstein-Lovelock Tensors} 
For $k\in \mathbb{Z}_{\geq 1}$ with $k\leq [n/2]$, the
$2k$-Gauss-Bonnet curvature $S^{(2k)}$ is the complete contraction of
the $k^{\rm th}$ tensor power of the Riemann curvature by the
generalised Kronecker tensor, and has the property that in dimension
$2k$ it is exactly the Pfaffian, i.e.\ the Chern-Gauss-Bonnet
integrand (at least up to a nonzero constant). On $(M,g)$ closed and
Riemannian, with $\cS^{(2k)}(g):= 2 \int_M S^{(2k)}(g) ~dv_g$ the
gradient $G^{(2k)}_{ab}=B_{ab}$ (in the sense of \nn{core}) is called
the Einstein-Lovelock tensor \cite{Love25,Labbi} if $2k\neq n$. (If
$2k= n $, then $\cS^{(2k)}(g)$ is a multiple of the
Euler characteristic.) Thus $G^{(2k)}_{ab}$ is locally conserved
$\nabla^a G^{(2k)}_{ab}=0$, for $2k\neq n $ $S^{(2k)}$ is naturally
conformally variational, and as a special case of Theorem \ref{maini}
we have the following.
\begin{proposition}\label{gb}
Let $X$ be a conformal vector field 
on a compact manifold $M$ with boundary $\partial M$. Then for $2k\neq n $
$$
\int_N \cL_XS^{(2k)}  ~ dv_g= 
- \frac{n}{2(n-2k)}\int_{\partial N} \GO^{(2k)}_{ab} X^a \nu^b~ d\sigma_g,
$$ 
where $ \GO^{(2k)}_{ab}$ is the trace-free part of $G^{(2k)}_{ab}$. In
particular on closed manifolds $\int_N \cL_XS^{(2k)} ~ dv_g=0$.
\end{proposition}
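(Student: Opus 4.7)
The plan is to apply Theorem \ref{maini} directly, taking $B_{ab} := G^{(2k)}_{ab}$, the Einstein-Lovelock tensor. Two inputs are needed: the local conservation law $\nabla^a G^{(2k)}_{ab}=0$, and an identification of the trace $V:=g^{ab} G^{(2k)}_{ab}$ as a specific multiple of $S^{(2k)}$. The first is already noted in the paragraph preceding the statement, being a special case of \nn{div} since $G^{(2k)}_{ab}$ is defined as a total metric gradient via \nn{core}.

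For the trace, I would combine Lemma \ref{Vori} with Proposition \ref{self}. Since $S^{(2k)}$ is a natural scalar of weight $-2k$ and is naturally conformally variational (also observed just before the statement), Proposition \ref{self} provides the canonical Riemannian functional $\cS_0(g):=(n-2k)^{-1}\int_M S^{(2k)}\,dv_g$ realizing $S^{(2k)}$ through \nn{Fbul}. Comparing with the normalization $\cS^{(2k)}(g) = 2\int_M S^{(2k)}\,dv_g = 2(n-2k)\cS_0(g)$ that defines $G^{(2k)}_{ab}$, and using that the gradient is linear in the functional, we obtain $G^{(2k)}_{ab} = 2(n-2k) B^{\cS_0}_{ab}$. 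Then Lemma \ref{Vori} applied to $\cS_0$ gives $g^{ab} B^{\cS_0}_{ab}=S^{(2k)}$, so
\begin{equation*}
V \;=\; g^{ab} G^{(2k)}_{ab} \;=\; 2(n-2k)\,S^{(2k)}.
\end{equation*}

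With these ingredients in hand, substituting $B=G^{(2k)}$ and $V=2(n-2k)S^{(2k)}$ into the identity \nn{b} of Theorem \ref{maini} yields
\begin{equation*}
2(n-2k)\int_N \cL_X S^{(2k)}\,dv_g \;=\; -\,n\int_{\partial N} \GO^{(2k)}_{ab}\,X^a\nu^b\,d\sigma_g,
\end{equation*}
and dividing by $2(n-2k)$ (permitted since $2k\neq n$) gives the asserted formula. The closed-manifold statement follows immediately by taking $\partial N = \emptyset$, or alternatively via Theorem \ref{ukw} applied to the naturally conformally variational scalar $S^{(2k)}$. There is no real obstacle here; the only point requiring care is tracking the normalization constant relating $\cS^{(2k)}$ and the canonical functional of Proposition \ref{self}, which is what produces the explicit coefficient $\tfrac{n}{2(n-2k)}$ on the boundary term.
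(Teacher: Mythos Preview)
Your proposal is correct and follows exactly the approach the paper intends: apply Theorem \ref{maini} with $B=G^{(2k)}$, using the local conservation $\nabla^a G^{(2k)}_{ab}=0$ from \nn{div} and the trace identification $g^{ab}G^{(2k)}_{ab}=2(n-2k)S^{(2k)}$ obtained via Proposition \ref{self} and Lemma \ref{Vori}. The paper does not spell out the trace computation but simply declares the proposition a special case of Theorem \ref{maini}; your tracking of the normalization constant is the correct way to produce the coefficient $\tfrac{n}{2(n-2k)}$.
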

\begin{remark}
  The last conclusion giving a Kazdan-Warner type identity is also
  given by \cite{GuoHL} using a direct calculation. Moreover they show
  that this also holds in the case $2k=n$. In fact it is easily
  verified that for $k$,such that the $S^{(2k)}$ are defined, the
  linearisation of the map $\om \mapsto S^{(2k)}(e^{2\om} g_0)$, $\om\in
  C^\infty(M)$, is formally-self-adjoint. So that result may also be
  obtained from Theorem \ref{bway}, or equivalently \cite[Theorem
  2.1]{delRob}. 
\end{remark}

\subsection{Mean curvature of Euclidean hypersurfaces}

Let $(M,g)$ be a codimension one submanifold of Euclidean space $
\mathbb{E}^{n+1}$, with $g_{ab}$ the pullback metric (i.e. the first
fundamental form). We write $\nabla_a$ to denote the Levi-Civita connection
of $g$.  Let us write $II_{ab}$ for the second fundamental form on $M$
induced by the embedding. Then, since $\mathbb{E}^{n+1}$ is flat,
$II_{ab}$ satisfies the contracted Codazzi equation (see e.g.\ \cite{HE}) 
\begin{equation}\label{ceqn}
\nabla^a II_{ab}- n \nabla_b H =0,
\end{equation}
where, as usual, $g^{ab}$ is the inverse to $g_{bc}$ and $H:=
\frac{1}{n} g^{cd}II_{cd} $ is the mean curvature of the embedding.
Thus the symmetric 2-tensor 
$$
B_{ab}:= II_{ab}-n g_{ab} H
$$
is locally conserved everywhere on $M$: $\nabla^a B_{ab}=0$. It
follows immediately that Theorem \ref{maini} gives a Pohozaev-Schoen type
identity on $M$ (which we may take to have a boundary) with $V=
n(1-n)H$. In particular as a special case of Corollary \ref{kwa} we
recover the following result.
\begin{theorem} \label{AHA} Let $\i : S^n\to \mathbb{E}^{n+1}$ be a
  conformal immersion with mean curvature $H$.  Then
  for any conformal vector field $X$ on $S^n$ we have
$$
\int_{S^n} \cL_X H dv_g=0,
$$
where we view $H$ as a function on $S^n$, and $dv_g$ is the pullback by
$\i$ of the first fundamental form measure. 
\end{theorem}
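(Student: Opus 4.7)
The proof is essentially an immediate application of Corollary \ref{kwa} to the locally conserved tensor $B_{ab} = II_{ab} - n g_{ab} H$ constructed in the paragraph preceding the statement. The plan is as follows.

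First I would fix the Riemannian structure on $S^n$ to be the pullback metric $g := \iota^* g_{\mathbb{E}}$, which is smooth precisely because $\iota$ is an immersion. Since $\iota$ is conformal, $g$ lies in the standard conformal class of $S^n$, so any conformal vector field $X$ on $S^n$ in the usual sense (i.e.\ for the round metric) is automatically a conformal vector field for $g$, as the property $\cL_X g = \frac{2}{n}(\div X) g$ depends only on the conformal class. This justifies invoking the hypotheses of Corollary \ref{kwa} with this particular $g$ and this particular $X$.

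Next I would recall from the preceding discussion that the contracted Codazzi equation \nn{ceqn} for a hypersurface in flat Euclidean space yields the locally conserved symmetric 2-tensor
\begin{equation*}
B_{ab} = II_{ab} - n g_{ab} H,\qquad \nabla^a B_{ab} = 0,
\end{equation*}
defined on all of $S^n$. Since $S^n$ is compact without boundary, the compact-support hypothesis of Corollary \ref{kwa} is automatic. Taking the metric trace gives $V := g^{ab} B_{ab} = nH - n^2 H = n(1-n) H$.

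Applying Corollary \ref{kwa} to this $B$, $V$, and $X$, one obtains
\begin{equation*}
0 = \int_{S^n} \cL_X V\, dv_g = n(1-n)\int_{S^n} \cL_X H\, dv_g.
\end{equation*}
Dividing by the nonzero constant $n(1-n)$ (valid for $n \geq 2$, which is the only dimension in which the statement is nontrivial, since $\iota$ maps $S^n$ into $\mathbb{E}^{n+1}$ as a hypersurface) yields the claimed identity. There is no real obstacle; the only point worth emphasizing is the conformal-invariance observation that $X$ being conformal for the round metric is equivalent to it being conformal for the pullback metric $g$, so that the hypothesis ``conformal vector field'' in the statement dovetails with the hypothesis of the corollary.
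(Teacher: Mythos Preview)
Your proposal is correct and follows essentially the same route as the paper: the paper also constructs $B_{ab}=II_{ab}-ng_{ab}H$ from the contracted Codazzi equation, notes $V=n(1-n)H$, and then invokes Corollary \ref{kwa} on the closed manifold $S^n$. Your explicit remark that a conformal immersion places $g$ in the standard conformal class, so that ``conformal vector field on $S^n$'' has the same meaning for $g$ as for the round metric, is a clarification the paper leaves implicit.
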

 
This Theorem is first due to Ammann et al.\ \cite{AHA}. There it is
established using the fact that, on $ \mathbb{E}^{n+1} $, the
restriction of a parallel spinor to $M$ satisfies a certain semilinear
variant of the Dirac equation. They show that any spinor satisfying
such an equation satisfies a Pohozaev-Schoen type identity. The argument
above provides a direct alternative argument for the Kazdan-Warner
type result in Theorem \ref{AHA}; in particular it avoids the use of
spinor fields. The Pohozaev-Schoen identity of \cite{AHA} is interesting and
it would be interesting to investigate whether or not it is a special
case of \nn{b}.

\subsection{The Pohozaev identity}\label{pohS}

That the classical Pohozaev identity of \cite{poh65},
$$
 \lambda n \int_M F(u)
 + \frac{2-n}{2} \lambda\int_M f(u)u
 =\frac{1}{2}\int_{\partial M} (x\cdot \nu)(\nabla_\nu u)^2,
 $$
 follows from the identity of Schoen is stated in \cite{schoen}.  We
 have not been able to find the argument written anywhere so, for the
 convenience of the reader, and since it is an idea that generalises, we
 shall give here the derivation.

For any conformal vector field $X$ on a Riemannian $n$-manifold
$(M,g)$ with smooth boundary $\partial M$, the following identity
holds
$$ 
   \int_M \cL_X \Sc ~dv_g
   = \frac{2n}{n-2}\int_{\partial M}\left(\Ric-\frac{1}{n}\Sc\cdot g \right)(X,\nu)d\sigma_g ;
   $$
 here $\nu$ is the outward normal, and $\Ric$ denotes the Ricci
 curvature.

 We start by taking $M \subset \bbR^n$ with metric $g = u^{4/(n-2)}
 g_0$, $g_0$ the Euclidean metric (with $dvol_{n-1}^0$ on $\partial M$
 and $dvol_n^0$ on $M$ resp.). Then with $p = \frac{n+2}{n-2}$ we have
 that
$$\Sc = -\frac{4(n-1)}{n-2} u^{-p} \Delta u$$
($\Delta$ the Euclidean Laplacian) and we take the (Euler) conformal vector field
$X = x_i \frac{\partial}{\partial x_i}$ (summation convention) to get 
$$ \cL_X \Sc = -\frac{4(n-1)}{n-2} \left( x_i(-p)u^{-p-1}\frac{\partial u}{\partial x_i} \Delta u
+ u^{-p} x_i \frac{\partial}{\partial x_i} \Delta u \right)$$
and so with the relevant volumes, noting that $dvol_n = u^{p+1} dvol_n^0$ and
$dvol_{n-1} = u^{2(n-1)/(n-2)} dvol_{n-1}^0$,
$$ \cL_X \Sc dvol_n = -\frac{4(n-1)}{n-2} \left( x_i(-p)\frac{\partial u}{\partial x_i} \Delta u
+ u x_i \frac{\partial}{\partial x_i} \Delta u \right) dvol_n^0$$
(from now on the volume forms will be understood as the Euclidean ones, and
omitted). In a similar way we can find the boundary term, using $u^{p-1} = e^{2f}$
and $$\Ric = (2-n)[\nabla df - df \otimes df] + [\Delta f - (n-2)|df|^2]g_0$$
which means that
$$\Ric_{ij} = (2-n)[\frac{\partial^2f}{\partial x_i \partial x_j} - \frac{\partial f}{ \partial x_i}
\frac{\partial f}{\partial x_j}] + [\frac{\partial^2f}{\partial x_k^2}
- (n-2)\frac{\partial f}{\partial x_k} \frac{\partial f}{ \partial
  x_k}]\delta_{ij}$$ and similarly in terms of $u$ and its
derivatives. 

Now we first use the
relation between the unit normals $\nu = u^{-2/(n-2)} \nu^0$, and then
consider the identity in cases where $u$ is very small along $\partial
M$; finally taking the limiting case that $u = 0$ on $\partial M$.  We find
Schoen's identity then simplifies and determines the following relation:
$$-\frac{4(n-1)}{n-2} \int_M (-p)x_i \frac{\partial u}{\partial x_i} \Delta u + u x_i 
\frac{\partial}{\partial x_i} \Delta u = $$
$$  
\frac{2n}{n-2}\int_{\partial M} \Bigl( (2-n)[-\frac{p-1}{2}\frac{\partial u}{\partial x_i}
\frac{\partial u}{\partial x_j} - (\frac{p-1}{2})^2 \frac{\partial u}{\partial x_i}
\frac{\partial u}{\partial x_j}] + $$ $$[-\frac{p-1}{2} \frac{\partial u}{\partial x_k}
\frac{\partial u}{\partial x_k} - (n-2) (\frac{p-1}{2})^2 \frac{\partial u}{\partial x_k}
\frac{\partial u}{\partial x_k}]\delta_{ij}\Bigr) \nu_i^0 x_j$$
where as before $p-1 = \frac{4}{n-2}$. Using the fact that $\nabla u$ is normal to the
boundary we obtain
$$4(n-1)\frac{n+2}{n-2} \int_M x_i \frac{\partial u}{\partial x_i} \Delta u
-4(n-1) \int_M u x_i \frac{\partial}{\partial x_i} \Delta u
$$ $$= 2n\frac{2n-2}{n-2} \int_{\partial M} u_{\nu}^2 (\nu^0 \cdot x).$$
Here we have used e.g.
$$\frac{\partial u}{\partial x_i} \frac{\partial u}{\partial x_j} \nu_i^0 x_j = u_{\nu}^2 (\nu^0 \cdot x).$$ 
Now the second integral over $M$ may be integrated by parts (and
no boundary term) to get the new integrand
$$ -x_i \frac{\partial u}{\partial x_j} \frac{\partial^2 u}{\partial x_i \partial x_j}
-u \Delta u$$
so we arrive at, after another integration by parts in the first
term above, this time in the $x_i$ variable,    
$$4(n-1)\frac{n+2}{n-2} \int_M x_i \frac{\partial u}{\partial x_i} \Delta u
-4(n-1) \frac{n+2}{2} \int_M |\nabla u|^2
$$ $$= 2\frac{n+2}{n-2} (n-1) \int_{\partial M} u_{\nu}^2 (\nu^0 \cdot x).$$
With (from the assumptions on $u$ in the Pohozaev identity)
$$\int_M |\nabla u|^2 = \lambda \int_M u f(u)$$
$$\int_M x_i \frac{\partial u}{\partial x_i}\Delta u = n \lambda \int_M F(u)$$        
we finally get
$$2 n \lambda \int_M F(u) - (n-2) \lambda \int_M u f(u)
= \int_{\partial M} u_{\nu}^2 (\nu^0 \cdot x)$$
which is the classical identity we wanted.

\end{document}